\newcommand{\ol}[1]{\overline{#1}}
\newcommand{\T}{\mathcal{T}}
\newcommand{\V}{\mathcal{V}}
\newcommand{\Blog}{\mathcal{B}_{\log}}
\newcommand{\BlogP}{\Blog^{\operatorname{p}}}
\newcommand{\re}{\operatorname{Re}}
\newcommand{\Sequ}{\mathcal{S}}
\newcommand{\Sequt}{\tilde{\Sequ}}
\newcommand{\Ht}{\tilde{H}}
\newcommand{\taddr}{\underline{t}}
\newcommand{\saddr}{\underline{s}}
\newcommand{\extaddr}{\operatorname{addr}}
\newcommand{\addr}{\extaddr}
\renewcommand{\phi}{\varphi}
\newcommand{\eps}{\varepsilon}
\newcommand{\C}{\mathbb C}
\newcommand{\R}{\mathbb R}
\newcommand{\Z}{\mathbb Z}
\newcommand{\N}{\mathbb N}
\newcommand{\Q}{\mathbb Q}
\newcommand{\A}{\mathcal A}
\newcommand{\B}{\mathcal B}
\renewcommand{\Re}{\textup{Re}}
\newcommand{\Ch}{\hat{\C}}
\newcommand{\cantorbouquet}{Cantor bouquet}
\renewcommand{\P}{\mathcal{P}}
\newcommand{\ignore}[1]{}
\theoremstyle{plain}
\newtheorem{lem}{Lemma}[section]
\newtheorem{prop}[lem]{Proposition}
\newtheorem{thm}[lem]{Theorem}
\newtheorem*{thm*}{Theorem}
\newtheorem{cor}[lem]{Corollary}
\newtheorem*{cor*}{Corollary}
\theoremstyle{definition}
\newtheorem{defn}[lem]{Definition}
\newtheorem*{defn*}{Definition}
\newtheorem*{ex*}{Example}
\newtheorem*{rem*}{Remark}
\theoremstyle{remark}
 \newtheoremstyle{claimstyle}%
   {}
   {}
   {\normalfont}
   {}
   {\itshape}
   {.}
   { }
   {\thmnote{#3}}
\theoremstyle{claimstyle}
\newtheorem*{varclaim}{}
\newenvironment{remark}[1][Remark]{\begin{varclaim}[#1]}{\end{varclaim}}
\DeclareMathOperator{\dist}{dist}
\DeclareMathOperator{\Sing}{Sing}
\begin{document}

\title[Brushing the hairs of entire functions]{Brushing the hairs \\ of 
  transcendental entire functions}

\date{\today}

\author{Krzysztof Bara\'nski}
\address{Institute of Mathematics, University of Warsaw,
ul.~Banacha~2, 02-097 Warszawa, Poland}
\email{baranski@mimuw.edu.pl}

\author{Xavier Jarque}
\address{Departament d'Enginyeria Inform\`atica i Matem\`atiques,
Universitat Rovira i Virgili,
Avinguda Pa\"isos Catalans~26, 43007 Tarragona,
Catalunya, Spain}
\email{xavier.jarque@ub.edu}

\author{Lasse Rempe}
\address{Department of Mathematical Sciences,
University of Liverpool,
Liverpool, L69 7ZL, United Kingdom }
\email{L.Rempe@liverpool.ac.uk}

\thanks{The first author is supported by Polish MNiSW Grant N N201 0234 33 
and Polish MNiSW SPB-M. 
The second author is partially supported  by 
 MTM2006--05849/Consolider, MTM2008--01486 (including a FEDER
contribution), MTM2011-26995-C02-02, and by 2009SGR 792.
The third author
is supported by EPSRC fellowship EP/E052851/1. 
All three authors are supported by 
the EU FP6 Marie Curie Program RTN CODY}
\subjclass[2000]{Primary 37F10, Secondary 54F15, 54H20, 30D05}

\begin{abstract} Let $f$ be a
 transcendental entire
 function of finite order 
 in the Eremenko-Lyubich class $\B$
 (or a finite composition of such maps), 
 and suppose that $f$ is hyperbolic and has a unique Fatou 
 component. We show that the Julia set of $f$ is a
 \emph{\cantorbouquet}; i.e.\ is ambiently homeomorphic to a straight 
 brush in the sense of Aarts and Oversteegen. In particular,
 we show that any two such Julia sets are ambiently homeomorphic. 

 We also show that if $f\in\B$ has finite order (or is a finite composition
  of such maps), but is not necessarily hyperbolic with connected Fatou
  set,
  then the Julia set of $f$ \emph{contains} a \cantorbouquet. 

 As part of our proof, we describe, for an arbitrary function
  $f\in\B$, a natural compactification of the dynamical plane
  by adding a ``circle of addresses'' at infinity. 
\end{abstract}

\maketitle

\section{Introduction}\label{sec:intro}

In recent decades there has been an increasing interest in studying the 
 dynamics generated by the iterates of transcendental entire functions. 
 For these dynamical systems, the presence of an essential singularity
 at infinity creates some major differences to the dynamics of 
 polynomials or rational maps. 

 More precisely, let $f: \C \to \C$ be a transcendental
  entire  function.
 Then
 the \emph{Julia set} $J(f)$ is the set of points $z\in\C$ at which 
 the family  $\{f^n\}_{n > 0}$ is not equicontinuous with respect to the
 spherical metric 
 (this is where the dynamics of $f$ is ``chaotic''). The complement 
 $F(f)=\C\setminus J(f)$---i.e.\ the set of stable behaviour---
 is called the \emph{Fatou set}.
 Due to the essential singularity at infinity, the 
 Julia sets of transcendental entire functions
 tend to be far more complicated than for rational maps, and
 one of the main problems in the field is to understand these sets
 (and the dynamics thereon) from a topological and geometric point of view.

A cornerstone example of transcendental entire maps is given by the complex 
 exponential family $E_{\lambda}(z)=\lambda \exp (z),\ \lambda\in\C$.
 It is often considered the simplest possible family of transcendental
 entire functions, and has received considerable attention since at least
 the 1980s. 
 For real parameter values $\lambda \in (0, 1/e)$, the Fatou set has a 
 unique connected component, consisting of all points that 
 converge to an
 attracting fixed point $z_0^{\lambda}\in \R$ under iteration. 
 In \cite{DK} it was proved that for these parameters, the Julia set is given 
 by a union of pairwise disjoint arcs to $\infty$,
 which are called 
 {\it hairs} or {\it dynamic rays}. 
 Each point $z$ on each hair, except possibly the finite 
 endpoint of the hair, satisfies 
 $\Re\left(E_{\lambda}^j\right)(z) \to +\infty$ when $j \to \infty$.
 Due to its appearance, the union of these hairs has been referred to as
 a {\it \cantorbouquet} (a term to which, for the purposes of this article,
 we will give a precise mathematical meaning below). 
 For a long time, hairs as above have also been known to exist
 for many other functions, such as the sine family,
 $z\mapsto \lambda \sin(z)$; see \cite{DT}.

 Aarts and Oversteegen \cite{AO} gave a complete topological
  description of the Julia sets $J(E_{\lambda})$, $\lambda\in (0,1/e)$, as
  well as the Julia sets $J(\lambda\sin)$ for $\lambda\in(0,1)$, by
  showing that all these sets are homeomorphic to a single universal topological
  object. They did so by explicitly constructing 
  a homeomorphism between the Julia set and a type of subset of 
  $\R^2$ that is called a \emph{straight brush} (see Definition
  \ref{defn:brush}), and proved that
  any two straight brushes are homeomorphic. In fact, they even showed
  that the sets are \emph{ambiently} homeomorphic; i.e., the homemorphism
  between them can be chosen to extend to a homeomorphism of $\R^2$.
  (Compare also \cite{BDDJM,R1} for a discussion of the
  topological dynamics of other parameter values in the exponential family.) 
  As mentioned above, we shall refer to sets such as these
  as \emph{{\cantorbouquet}s}: 
\begin{defn}[\cantorbouquet]
 A \emph{\cantorbouquet} is any subset of the plane that is ambiently
  homeomorphic to a straight brush.
\end{defn}
\begin{remark}
 This is different from the terminology in \cite{DT}. There 
  a ``Cantor $n$-bouquet'' is a certain type of subset of the
  Julia set that is homeomorphic to the product of a Cantor set with
  the interval $[0,\infty)$, and a Cantor bouquet is simply the closure
  of an increasing sequence of Cantor $n$-bouquets, with $n\to\infty$. 

 We note that the Cantor bouquets we construct can also be seen to be 
  Cantor bouquets in the sense of Devaney and Tangerman, and that the
  functions considered in \cite{DT} satisfy the hypotheses of our 
  theorems. 
\end{remark}

 Recently, there has been significant progress in extending the above-mentioned
  description of exponential Julia sets (and those of other explicit maps) to
  much more general classes of transcendental entire functions. To state
  these results, we use the following standard definitions.
 \begin{defn}[Hyperbolicity]
  A transcendental
   entire function $f$ is called \emph{hyperbolic} if the
   \emph{postsingular set}
     \[ \P(f) := \ol{\bigcup_{j\geq 0} f^j(\Sing(f^{-1}))}  \]
   is a compact subset of the Fatou set $F(f)$. 

   The function $f$ is called \emph{of disjoint type} if it is
    hyperbolic and furthermore $F(f)$ is connected. (Equivalently,
    $\ol{\Sing(f^{-1})}$ is a compact subset of the immediate
    attracting basin of an attracting fixed point.)
 \end{defn}
 \begin{remark}
  Here $\Sing(f^{-1})$ denotes the set of \emph{singularities of $f^{-1}$}. 
   $\Sing(f^{-1})$ consists of the
   critical and asymptotic values of $f$. Recall that an
   \emph{asymptotic value} of $f$ is a point 
    $z_{0}\in\C$ for which there
    is a curve $\alpha(t):[0,\infty)\to\C$ satisfying  
   $|\alpha(t)| \to \infty$ and $f\left(\alpha(t)\right)\to z_{0}$ as 
   $t\to \infty$. For example, $0$ is the unique singular value of
   the exponential map $E_{\lambda}$. 

  Note that our definition of hyperbolicity implies, in particular,
   that the set $\Sing(f^{-1})$ is bounded. The set of all transcendental
   entire functions with the latter property is known as the
   \emph{Eremenko-Lyubich class} and denoted by $\B$. 
 \end{remark}

 \begin{defn}[Finite order]
  A transcendental entire function has \emph{finite order}
   if there are constants $c,\rho>0$ such that
   $|f(z)|\leq c\cdot e^{|z|^{\rho}}$ for all $z\in\C$. 
 \end{defn}

 The following result was established in \cite{B,RRRS}:

\begin{thm}[{\cite[Theorem C]{B}, \cite[Theorem 5.10]{RRRS}}]
  \label{thm:hairs}
 Let $f$ be a disjoint-type function of finite order. Then $J(f)$
  is a union of pairwise disjoint arcs to infinity.

 The conclusion holds, more generally, if $f$ is a disjoint-type function
  that can be written as a finite composition of finite-order functions
  in the class $\B$. 
\end{thm}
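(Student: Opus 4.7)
The plan is to prove Theorem~\ref{thm:hairs} via the Eremenko--Lyubich \emph{logarithmic change of variables} combined with the so-called \emph{head-start condition}, which is the analytic consequence of the finite-order hypothesis that forces the geometry to be tame enough for symbolic dynamics to work. First I would perform the standard logarithmic transform. After translating so that $\ol{\Sing(f^{-1})} \subset \D$, each component $V$ of $f^{-1}(\{|z|>1\})$ is an unbounded simply connected domain and $f|_V : V \to \{|z|>1\}$ is a universal covering. Lifting via $\exp$ yields a disjoint union of \emph{tracts} $\T := \bigsqcup_i T_i \subset \Hplane := \{\re z > 0\}$ together with a holomorphic map $F : \T \to \Hplane$ that sends each $T_i$ conformally onto $\Hplane$ and satisfies $\exp \circ F = f \circ \exp$. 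The disjoint-type hypothesis translates into the statement that the Julia dynamics of $f$ is entirely encoded by the orbits of $F$ which never leave $\T$.

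The main analytic ingredient, and the step I expect to be the chief technical obstacle, is to establish the head-start condition: constants $K > 1$ and $M > 0$ such that for every tract $T_i$ and every $z, w \in T_i$,
\[
\re z > K\,\re w + M \quad \Longrightarrow \quad \re F(z) > K\,\re F(w) + M.
\]
I would derive this in two steps. First, the finite-order hypothesis forces the tracts to be ``thin'' at infinity: a Wiman--Valiron / Denjoy--Carleman--Ahlfors type growth argument gives a quantitative Lipschitz-like bound on the width of $T_i \cap \{\re z = r\}$ as $r \to \infty$. Second, because $F|_{T_i}$ is a conformal bijection onto the whole half-plane $\Hplane$, Koebe's distortion theorem converts this geometric thinness into the multiplicative expansion of $\re F$ on each tract. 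For a composition $f = g_\ell \circ \cdots \circ g_1$ of finite-order class-$\B$ functions, I would define \emph{composition tracts} by successively pulling back components through each $g_k$ and then chain the individual thinness and Koebe estimates; verifying that the constants $K, M$ remain uniform along all composition orbits, not just along the orbits of a single $g_k$, is where the argument becomes delicate and is the part I anticipate needing the most care.

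With the head-start condition in hand, the remaining work is combinatorial. For each external address $\saddr = s_0 s_1 s_2 \cdots$ set
\[
J_{\saddr} := \{\, z \in \T : F^n(z) \in T_{s_n} \text{ for every } n \ge 0\,\}.
\]
The head-start condition shows at once that any two distinct points of $J_{\saddr}$ must be strictly comparable in real part, for otherwise repeated application of $F$ together with hyperbolic expansion of $F : T_i \to \Hplane$ would force them to coincide; hence $J_{\saddr}$ is linearly ordered by $\re$. Combining this order structure with continuity of $z \mapsto \re z$ and with the uniform hyperbolic expansion, one upgrades $J_{\saddr}$ to a topological arc whose ``large-real-part'' end accumulates only at $\infty$. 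Projecting via $\exp$ and taking the union over all admissible addresses then produces the claimed pairwise disjoint decomposition of $J(f)$ into arcs to infinity, in both the single-function and composition settings.
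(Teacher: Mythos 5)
Your proposal follows exactly the route of the cited proofs in [B] and [RRRS], which this paper relies on but does not reprove: logarithmic change of variables, a uniform head-start condition derived from the finite-order growth bound (tract thinness via growth estimates plus Koebe distortion), and a symbolic decomposition of $J(F)$ by external addresses, with the head-start condition producing a total "speed ordering" on each $J_{\saddr}$ which, combined with expansion, upgrades each fiber to an arc to infinity. The composition case by chaining tract estimates is likewise the mechanism in \cite[Section 5]{RRRS}. So the strategy is the right one and essentially coincides with the references.

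Two small points worth tightening. First, the head-start condition really has two parts: the preservation statement you wrote, \emph{and} the separate "eventual separation" requirement that for distinct $z,w\in J_{\saddr}$ some iterate actually acquires a head start. You derive the latter informally from hyperbolic expansion, but in [RRRS] it is a nontrivial consequence of the tract geometry (bounded slope/wiggling), not of expansion alone: two orbits can increase their Euclidean distance without ever separating in real part unless one controls the shape of the tracts. Since you invoke tract thinness earlier, the ingredient is present, but the "at once" is optimistic. Second, the phrase "$J_{\saddr}$ is linearly ordered by $\re$" is imprecise: the relevant order is the speed ordering $z\prec w \iff \exists j:\ \re F^j(w)>\phi(\re F^j(z))$, not the real-part order at time zero; and to conclude that each $J_{\saddr}$ is a \emph{single} arc (rather than a disjoint union of arcs) one needs the disjoint-type hypothesis, which makes $J_{\saddr}(F)=\bigcap_n F^{-n}(\ol{T_{s_n}})$ a nested intersection of unbounded closed connected sets and hence connected. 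You use disjoint type elsewhere but should flag it here, since this is exactly the step that fails for general functions in $\B$ and is why the non-hyperbolic version of the result (Theorem~\ref{thm:bouquetgeneral}) only gives an absorbing subset.
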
 

 It was also claimed without proof
  in \cite{R,RRRS} that, under the hypotheses of the
  theorem, the Julia sets are {\cantorbouquet}s (i.e., ambiently
  homeomorphic to
  a straight brush in the sense of \cite{AO}). In this note, we justify
  this claim. 

\begin{thm} \label{thm:bouquet}
 Under the hypotheses of Theorem \ref{thm:hairs}, the Julia set
  $J(f)$ is a \cantorbouquet. 
\end{thm}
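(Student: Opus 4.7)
My plan is to exhibit a straight brush $B_0 \subset \R^2$ together with a homeomorphism $H \colon \C \to \R^2$ sending $J(f)$ bijectively onto $B_0$. Theorem \ref{thm:hairs} already supplies the decomposition of $J(f)$ into pairwise disjoint arcs; the remaining work is to organize this combinatorial data into a straight brush and to promote the resulting set-homeomorphism to an ambient one, using the ``circle of addresses'' compactification announced in the abstract.

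First I would record the symbolic structure. Each arc of $J(f)$ carries a well-defined external address $\taddr \in \Sequ$ describing the sequence of logarithmic tracts visited under iteration, with a canonical parametrization $t \mapsto \gamma_{\taddr}(t)$, $t \geq t_{\taddr}$, by the potential (escape rate), and with endpoint $\gamma_{\taddr}(t_{\taddr})$. Choosing the vertical ordering on tracts coming from logarithmic coordinates equips the set $\Sequh \subseteq \Sequ$ of realized addresses with a total order (cyclic, once one passes to the circle at infinity) which agrees with the vertical order of hairs inside each fundamental strip. The three combinatorial inputs I would extract from the proof of Theorem \ref{thm:hairs} and the class-$\B$ expansion estimates are: upper semicontinuity of $\taddr \mapsto t_{\taddr}$; approximation of each hair from both sides by other hairs with controlled endpoints; and that $\Sequh$ is totally disconnected and dense in itself in the order topology.

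Second I would build the model. By Cantor's theorem on countable dense linear orders (with a little bookkeeping so that periodic addresses map to specific irrationals), embed $\Sequh$ order-isomorphically as a subset $A \subset \R \setminus \Q$ via $\phi \colon \Sequh \to A$, and set
\[
B_0 \;=\; \{(\phi(\taddr),\, t)\; : \;\taddr \in \Sequh,\ t \geq t_{\taddr}\}.
\]
The straight-brush axioms of \cite{AO} then reduce, after transporting through $\phi$, precisely to the three combinatorial inputs listed above. The hairwise map $H_0 \colon J(f) \to B_0$ defined by $\gamma_{\taddr}(t) \mapsto (\phi(\taddr),\,t)$ is a bijection; continuity in both directions reduces to the statement that hairs with nearby addresses are close in the plane, which is a direct corollary of the hyperbolic contraction estimates already used to prove Theorem \ref{thm:hairs}.

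The main obstacle is the ambient extension, and this is exactly what the ``circle of addresses'' compactification is designed for. I would attach to $\C$ a circle $\mathbb{S}$ parametrized by $\Sequ$ so that $\gamma_{\taddr}(t) \to \taddr \in \mathbb{S}$ as $t \to \infty$, producing a compactification $\widehat{\C} = \C \cup \mathbb{S}$ that is a closed topological disk, in which $\widehat{J}(f) := J(f) \cup \mathbb{S}$ is closed. An analogous one-point-per-direction compactification of the model plane yields a disk in which $B_0$ together with its circle at infinity is closed. Because $f$ is of disjoint type, $F(f)$ is connected and, in $\widehat{\C}$, becomes a topological disk whose boundary is $\widehat{J}(f)$; the complement of the compactified $B_0$ has the same property. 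The map $H_0$ extends continuously across $\mathbb{S}$ by the order-isomorphism $\phi$, and the Schoenflies theorem (applied after identifying each compactified plane with a hemisphere of $S^2$) extends this boundary homeomorphism across the complementary Fatou disks. Restricting back to $\C$ gives the desired ambient homeomorphism, and hence $J(f)$ is a {\cantorbouquet}. The hard part is genuinely the last step: one must verify that the address compactification is metrizable and gives a disk, and that the extension across $F(f)$ can be made consistent with $H_0$ on $\widehat{J}(f)$; everything upstream is combinatorial bookkeeping on $\Sequh$ together with facts already contained in the proof of Theorem \ref{thm:hairs}.
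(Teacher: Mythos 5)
Your first two steps are in the right spirit, but they gloss over the genuinely delicate point (continuity of the arcs in Hausdorff metric as addresses vary, handled in the paper via Proposition~\ref{prop:nowiggle} and the order topology from the head-start condition) and they presuppose a canonical ``potential'' parametrization $t$ along hairs; no such intrinsic parametrization exists for transcendental entire functions, and the paper must manufacture one from a Whitney map only in the non-disjoint-type case (Section~\ref{sec:proofgeneral}).

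The serious gap is in the ambient extension. You propose to extend $H_0$ across the complementary Fatou components by Schoenflies, after observing that $F(f)$ is a disk in the compactified plane $\widehat\C$. But the boundary of $F(f)$ in $\widehat\C$ is $\widehat J(f)$ itself --- a hairy arc (or hairy circle), not a Jordan curve --- and its closure is all of $\widehat\C$, not a closed subdisk. The Schoenflies theorem applies only to Jordan curves and gives nothing here: there is no pair of closed disks across which to extend. Extending a homeomorphism between two one-sided hairy arcs in the plane to an ambient homeomorphism of $\R^2$ is precisely the nontrivial content of Aarts--Oversteegen's Theorem~4.1 (quoted here as part of Theorem~\ref{thm:ambient}); it is a substantial theorem in its own right, not a corollary of Schoenflies. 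The paper's route deliberately avoids constructing any explicit brush or explicit map: it verifies that the compactified Julia set satisfies the purely topological axioms of a comb (Proposition~\ref{prop:comb}) and the two-sided accumulation property (Proposition~\ref{prop:density}), concludes it is a one-sided hairy arc, and then invokes~\cite{AO} as a black box for both the homeomorphism and its ambient extension. If you want an independent proof along your lines, you must either reprove the Aarts--Oversteegen ambient-extension theorem or reduce to it --- Schoenflies does not suffice.
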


 We note that, as with the results in \cite{RRRS}, the 
  theorem holds for a more general class of functions
  than stated above (see Definition \ref{defn:headstart} and 
  Corollary \ref{cor:bouquetheadstart}).
  However, it is necessary to make some function-theoretic assumptions
  in addition from the dynamical requirement of the function being of
  disjoint type. 
  Indeed,
  in \cite[Theorem 8.4]{RRRS}, 
  it is shown that there exists a disjoint-type function
  $f$ such that $J(f)$ contains no nontrivial
  curves. So in this case $J(f)$ is very far from being
  a \cantorbouquet! 

 Theorem \ref{thm:hairs} and Corollary \ref{cor:bouquetheadstart}
  imply, in particular, that all the Julia sets covered by these
  results are ambiently homeomorphic. 
  We note that this is in contrast to the
  fact---already noted in \cite{AO}---that the geometry and dynamics
  of Cantor bouquets can be rather diverse. We mention a few examples:
  \begin{itemize}
   \item For hyperbolic exponential maps, the Julia set has 
    zero Lebesgue measure, but for 
    maps in the sine family, the area is positive \cite{mcmullen}. 
   \item If $f$ is of finite order, then the Julia set always has
    Hausdorff dimension $2$, while the dimension of the set of hairs
    (without endpoints) is equal to $1$ \cite{B2}. On the other hand,
    Stallard \cite{stallard} showed that 
    the Hausdorff dimension of $J(f)$ can take any value
    $d\in (1,2]$,  and it can be checked that these examples
     satisfy the hypotheses of
     Corollary \ref{cor:bouquetheadstart}. 
  \item For hyperbolic exponential maps, 
   the set of escaping points has Hausdorff dimension $2$,
   and the set of non-escaping points has Hausdorff dimension strictly less than
   $2$ \cite{UZ}. However, there are examples of finite-order disjoint-type 
   functions for which the set of nonescaping points also has full Hausdorff dimension
   \cite{hypdim_two} and functions (of infinite order) for which the
   escaping set has Hausdorff dimension equal to $1$, and the Julia set has
   dimension arbitrarily close to $1$ \cite{rempestallard}. 
 \end{itemize}

\medskip
 It has been known for a long time that hairs exist in the dynamical
  plane of exponential maps also when the map is not hyperbolic. Indeed,
  in recent years this approach has been instrumental in obtaining
  a 
  much deeper understanding of the dynamics for arbitrary
  parameter values of the exponential family 
  (compare \cite{R1,SZ,SZ1} and references therein). 

 The results in \cite{RRRS} actually establish the existence of hairs
  for all (not necessarily disjoint type or hyperbolic) maps
  $f\in\B$ of finite order, as well as their compositions. Essentially, 
  it is proved that the set of all points that stay near infinity under
  the iteration of such a function consists of hairs. (See 
  Corollary~\ref{cor:absorbing}
  below for a precise statement.) We prove a version of our theorem
  also in this case, which shows in particular that the Julia 
  set of such a function always contains a \cantorbouquet.

 \begin{thm}[Absorbing {\cantorbouquet}s]\label{thm:bouquetgeneral}
  Let $f$ be a finite order function in the class $\B$ 
   (or, more generally, a composition of such
   functions). Then for every $R>0$, there exists 
   a {\cantorbouquet} $X\subset J(f)$ with $f(X)\subset X$ such that 
  \begin{enumerate}
   \item $|f^j(z)|\geq R$ for all $z\in X$ and $j\geq 0$;
   \item there is $R'\geq R$ such that, if $z\in\C$ with
      $|f^j(z)|\geq R'$ for all $j$, then $z\in X$. 
  \end{enumerate}
 \end{thm}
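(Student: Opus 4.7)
The plan is to reduce Theorem~\ref{thm:bouquetgeneral} to the disjoint-type case already treated by Theorem~\ref{thm:bouquet} (or, more precisely, its head-start version, Corollary~\ref{cor:bouquetheadstart}). Given $R>0$, I would choose $R'\geq R$ large enough that Corollary~\ref{cor:absorbing} applies, and set
\[
X := \{\,z \in \C : |f^j(z)| \geq R' \text{ for all } j \geq 0\,\}.
\]
Then $X$ is forward invariant, satisfies property~(1) (with $R'\geq R$) and property~(2) tautologically, and lies in $J(f)$ by standard Eremenko--Lyubich-type arguments for the class $\B$.

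To identify $X$ with a {\cantorbouquet}, I would compare $f$ with an auxiliary \emph{disjoint-type} map $g$ that shares its behavior near infinity. Writing $f = f_1 \circ \dots \circ f_n$ with each $f_i$ finite-order in $\B$, set $g := (\lambda f_1) \circ f_2 \circ \dots \circ f_n$ for small $\lambda>0$. Then $g$ is still a finite composition of finite-order class-$\B$ maps, and for $\lambda$ sufficiently small $g$ contracts $\C$ into a small disk, so $g$ is of disjoint type. Corollary~\ref{cor:bouquetheadstart} then gives that $J(g)$ is a {\cantorbouquet}. Because $f$ and $g$ have the same logarithmic tracts (differing only by a bounded factor near infinity), they obey a common head-start condition above height $R'$, and the external-address construction that proves Theorem~\ref{thm:bouquet} yields a homeomorphism $h : J(g) \to X$ that preserves addresses and intertwines the dynamics.

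Finally, I would upgrade $h$ to an \emph{ambient} homeomorphism of $\R^2$. The ``circle of addresses'' compactification promised in the abstract is crucial here: both $J(g)$ and $X$ embed compatibly into this compactification, so $h$ extends continuously to the boundary circle by acting as the identity on addresses, and a Schoenflies/Moore-type argument in the spirit of \cite{AO} then extends the compactified map to a homeomorphism of the closed disk. Restricted to the plane this yields the ambient homeomorphism needed for $X$ to qualify as a {\cantorbouquet}. The main obstacle, which I expect to require the most care, is precisely this ambient step: while for the disjoint-type model $g$ the complement of $J(g)$ is a single tame Fatou component, the complement $\C\setminus X$ for $f$ can contain complicated pieces of $J(f)$ (further Fatou components, non-escaping Julia points, additional Julia-set hairs). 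One must therefore show that, up to ambient homeomorphism, the embedding $X \hookrightarrow \C$ is determined by the address data and the head-start condition alone, independent of this extra structure.
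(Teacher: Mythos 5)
There is a genuine gap at the very first step. You set $X := \{z : |f^j(z)| \ge R' \text{ for all } j\}$ and assert this is the Cantor bouquet, but the theorem is carefully stated so that $X$ merely \emph{contains} all such points (item~(2)), while the members of $X$ only satisfy the weaker bound in item~(1). This is not an accident: the set $\{z : |f^j(z)| \ge R'\ \forall j\}$ is in general \emph{not} a union of full tails of hairs and hence not obviously a comb. A point farther out on a hair (in the speed ordering) will eventually dominate, but at finitely many initial iterates its real part in logarithmic coordinates can dip below any fixed threshold; so, on a single hair, the condition $\re F^j(z) \ge R'$ for all $j$ can cut out a complicated closed set rather than an interval. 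The paper circumvents this by constructing a continuous potential function $\rho$ on the absorbing set $X$ from Corollary~\ref{cor:absorbing} (via a Whitney/size function on the quotient comb $\tilde X / 2\pi i\Z$) which \emph{is} strictly monotone along hairs and tends to $\infty$ exactly when $\re z \to \infty$. Defining $Z := \{z \in X : \rho(F^j(z)) \ge K\ \forall j\} \cup \Sequt$ then automatically gives a comb, sandwiched so that $J^R(F) \subset Z$ for suitable $R$ while all points of $Z$ stay above a given height. The accumulation-from-both-sides property is then proved directly for $F$ itself (Proposition~\ref{prop:density2}, the Euclidean-expansion analogue of Proposition~\ref{prop:density}), with no detour through an auxiliary map.

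Your second idea --- passing to the disjoint-type model $g = (\lambda f_1)\circ f_2 \circ \dots \circ f_n$ and transporting the Cantor bouquet structure --- is a genuinely different route from the paper's and is closer in spirit to the conjugacy results of \cite{helenaconjugacy}, but it is not carried far enough to close the argument. Rescaling by $\lambda$ does change the logarithmic tracts, so ``same tracts, common head-start condition'' requires justification, and even granting a conjugacy between the escaping sets of $g$ and $f$, you would still need to know that it sends the hairy-arc structure of $J(g)$ onto a hairy arc inside $\C$; that is essentially equivalent to what the potential-function argument establishes, so the reduction does not save work. Finally, your worry about the ambient step is misplaced: once one has a one-sided hairy arc sitting in the compactified plane, Theorem~\ref{thm:ambient} (Aarts--Oversteegen) \emph{automatically} upgrades any homeomorphism of hairy arcs to an ambient one; the complicated complement of $X$ in $\C$ plays no role. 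The hard content is proving the hairy-arc property, not the ambient extension.
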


\medskip

In our proof we use the results from \cite{B,RRRS}, together with
 the topological characterisation of {\cantorbouquet}s given in \cite{AO}.
 To apply the latter, we compactify the Julia set $J(f)$ by adding a 
 circle (of ``addresses'') at infinity, with different hairs of $J(f)$
 ending at different points of this circle. This construction is, in fact,
 completely general: If $f\in\B$ is arbitrary, then we describe
 a natural combinatorial compactification of the dynamical plane, which
 is likely to be useful in future applications. (Again, this generalizes
 a construction well-known in the setting of exponential maps.)

\subsection*{Organization of the paper}
 In Section~\ref{sec:hairy} we recall the topological definitions and results
  from \cite{AO}. In Section \ref{sec:log}, we shortly review the 
  \emph{logarithmic change of coordinates}, one of the main tools for
  studying the class $\B$, and the combinatorial notion of external addresses.
  Section~\ref{sec:hairs} contains a description of the hairs from 
  Theorem \ref{thm:hairs}, while Section~\ref{sec:comb} constructs a
  suitable 
  compactification of the Julia set. Finally,
  we prove our main theorems in
  Sections \ref{sec:proof} and \ref{sec:proofgeneral}.

 \subsection*{Basic notation}
  As usual, we denote the complex plane by $\C$ and the Riemann sphere by
   $\Ch=\C\cup\{\infty\}$. Closures and boundaries will be considered in
   $\C$ unless explicitly stated otherwise.

  We use the notation $\N_0$ for the set of
  nonnegative integers.

\section{Straight brushes and hairy arcs}\label{sec:hairy}
 In this section, we review some topological notions and results from
  \cite{AO}. We begin with the formal definition of a straight brush. 

\begin{defn}[Straight brush] \label{defn:brush}
A subset $B$ of $[0,+\infty) \times (\R\setminus\Q)$ is called a 
  \emph{straight brush}
  if the following properties are satisfied:
\begin{itemize}
\item
The set $B$ is a closed subset of $\R^2$.
\item 
For every $(x,y) \in B$ there exists $t_y \geq 0$ such that $\{x: (x,y)\in B\} = [t_y, +\infty)$. 
The set $[t_y, +\infty) \times \{y\}$ is called the hair attached at $y$ and the point $(t_y, y)$ is called its endpoint.
\item
The set $\{y: (x,y) \in B \text{ for some } x\}$ is dense in $\R\setminus\Q$. 
  Moreover, for every $(x, y) \in B$ there exist two sequences of hairs 
  attached respectively at $\beta_n, \gamma_n \in  \R\setminus\Q$ such that 
  $\beta_n < y < \gamma_n$, $\beta_n, \gamma_n \to y$ and 
  $t_{\beta_n}, t_{\gamma_n} \to t_y$ as $n\to\infty$.
\end{itemize}
\end{defn}

A simple topological characterization of straight brushes can be 
 found in \cite{BO}; in particular, any two such spaces are homeomorphic.
 In fact, as shown in \cite{AO},
 they are even \emph{ambiently} homeomorphic, which essentially
 requires that the ``vertical'' order of the hairs is preserved under
 the homeomorphism. To discuss this, it is useful to 
 consider a compactification of the brush instead. Hence we make
 the following definitions, following \cite{AO}.

\begin{defn}[Comb]
 A comb is a continuum (i.e., a compact connected metric space) $X$
  containing an arc $B$ (called the base of $X$) such that:
 \begin{enumerate}
  \item The closure of every connected component of $X\setminus B$ is an arc,
         with exactly one endpoint in $B$.
         In particular, for every $x\in X\setminus B$, there exists
          a unique arc $\gamma_x:[0,1]\to X$ with $\gamma_x(0)=x$,
          $\gamma_x(t)\notin B$ for $t<1$ and $\gamma_x(1)\in B$. 
          In this case, we say that $x$ belongs to the hair
          attached at $\gamma_x(1)$. 
  \item Distinct components of $X\setminus B$ have disjoint closures. 
  \item The set $X\setminus B$ is dense in $X$. 
  \item If $x_0\in X\setminus B$ and $x_n\in X\setminus B$ is a sequence
         of points converging to $x_0$, then
         $\gamma_{x_n}\to \gamma_{x_0}$ in the Hausdorff metric as
         $n\to\infty$. 
 \end{enumerate}
\end{defn}
\begin{remark}
 This definition is formally somewhat different from the one
  given in \cite{AO}, but it is not difficult to see that they are
  equivalent. We decided to use this, rather explicit,
  form to avoid having to
  introduce additional topological concepts that are not relevant
  to the remainder of the paper.
\end{remark}

\begin{defn}
 A \emph{hairy arc} is a comb with base $B$ and a total order 
  $\prec$ on $B$ (generating the 
   topology of $B$) such that the following holds. 
  If $b\in B$ and $x$ belongs to the hair attached at $b$, then
  there exist sequences $x_n^+,x_n^-$, attached respectively at
  points $b_n^+,b_n^-\in B$, such that 
  $b_n^-\prec b \prec b_n^+$ and $x_n^-,x_n^+\to x$ as 
  $n\to\infty$.

 A set $A\subset \R^2$ is called a \emph{one-sided hairy arc} if
  $A$ is topologically a hairy arc and all hairs are
  attached to the same side of the base. 
\end{defn}
\begin{remark}[Remark 1]
 Here it should be intuitively clear what ``being attached to the same side of the base''
  means (this is the same terminology as used in \cite{AO}). Formally, we can define
  this notion as follows: there exists a Jordan arc $C\in \R^2\setminus A$ connecting 
  the two endpoints of the base $B$ such that all connected components of
  $A\setminus B$ belong to the same connected component of $\R^2\setminus (B\cup C)$.
\end{remark}
\begin{remark}[Remark 2]
 In \cite{AO}, a hairy arc is defined, instead, as a space
  homeomorphic to what is called a \emph{straight one-sided hairy arc}
  (in analogy with the notion of a straight brush). 
  It is then shown in 
  \cite[Theorem 3.11]{AO} that this coincides with the
  topological definition above. 
\end{remark}

 It is easy to see that, to any straight brush, we can add
  a base $B=\{(\infty,y): y\in [-\infty,+\infty]\}$ to obtain
  a hairy arc. Conversely, for any hairy arc $X$, the
  set $X\setminus B$ is homeomorphic to a straight brush; this
  is implied by the following theorem. 

\begin{thm}[{\cite[Theorems 3.2, 3.11 and 4.1]{AO}}] \label{thm:ambient}
 Any two hairy arcs are homeomorphic. 

 Furthermore, any two
  one-sided hairy arcs $X_1,X_2\subset \R^2$ are ambiently homeomorphic.
  More precisely, any homeomorphism $\phi:X_1\to X_2$ extends
  to a homeomorphism $\phi:\R^2\to \R^2$.

 In particular, any two
   straight brushes are ambiently homeomorphic.
\end{thm}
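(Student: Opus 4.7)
The plan is to construct, for every hairy arc $X$, a homeomorphism onto a canonical model (the natural compactification of a standard straight brush obtained by adjoining a vertical line of ideal points at $+\infty$). All three assertions then follow by transitivity, the second and third additionally requiring planar topology.

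For the first assertion, let $X_1, X_2$ be hairy arcs with bases $B_i$, orders $\prec_i$, and sets of attachment points $A_i \subset B_i$. First I would parametrize each $B_i$ by $[0,1]$ compatibly with $\prec_i$; by the two-sided approximation axiom in the definition of a hairy arc, the image of $A_i$ is dense in $[0,1]$ with every point of $[0,1]$ a two-sided limit of $A_i$. The combinatorial core is to produce an order-preserving homeomorphism $\psi : [0,1] \to [0,1]$ carrying $A_1$ to $A_2$. With $\psi$ in hand, I would extend to a map $X_1 \to X_2$ hair-by-hair, using that each hair is an arc canonically parametrized from its attachment point. Continuity on the interior of hairs is immediate; continuity at base points follows from the Hausdorff-convergence axiom (4) in the comb definition together with the two-sided density of $A_i$.

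For the ambient statement, let $X_1, X_2 \subset \R^2$ be one-sided hairy arcs and let $C_i$ be Jordan arcs as in Remark 1, so that $B_i \cup C_i$ bounds a disk $D_i$ disjoint from the hairs. Given a homeomorphism $\phi : X_1 \to X_2$, I would first choose a homeomorphism $C_1 \to C_2$ matching endpoints and invoke the Schoenflies theorem to extend across $\ol{D_1}$, producing an ambient homeomorphism on the hair-free side of the base. On the hair side, each hair together with its attachment point is a tame arc in $\R^2$; a cover of $\R^2 \setminus X_1$ by countably many simply connected open sets, together with the Hausdorff-convergence axiom, allows one to extend $\phi$ piecewise via Schoenflies and glue the pieces into a single homeomorphism of $\R^2$. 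The third assertion is then immediate: every straight brush admits a canonical compactification to a one-sided hairy arc in $\R^2$, and restricting the ambient homeomorphism produced above yields the desired ambient homeomorphism between the original brushes.

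The main obstacle is the matching of the attachment sets $A_i$, which may be uncountable: a naive back-and-forth argument does not apply, so one must use the additional topological structure (notably the two-sided Hausdorff approximation of hairs by nearby hairs) to pin down the order isomorphism and show that the only invariant of a hairy arc is its order type together with this limit data. A second difficulty, specific to the ambient version, is that the "channels" between nearby hairs in $\R^2$ can be geometrically arbitrarily thin, so the Schoenflies extensions must be chosen with uniform diameter control across an accumulating family of hairs; the one-sided hypothesis is essential here, since it rules out two-sided pinching that would destroy the local simple-connectedness needed for the gluing.
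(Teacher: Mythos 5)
The paper does not prove this theorem; it is cited verbatim from Aarts--Oversteegen \cite{AO} (their Theorems 3.2, 3.11 and 4.1), so there is no argument in the present paper against which to compare yours. Your outline has the right general shape for a proof of the \cite{AO} result (pass to a canonical model, match the bases, extend hair-by-hair, use Schoenflies for the ambient statement), but the step you yourself label ``the main obstacle'' is precisely where essentially all of the content of \cite{AO} lies, and the proposal neither carries it out nor indicates how your strategy would get around it.

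Two concrete gaps. First, the existence of an order isomorphism $\psi\colon[0,1]\to[0,1]$ with $\psi(A_1)=A_2$ is a nontrivial assertion: using closedness of the brush together with the two-sided approximation axiom one can build a Cantor set inside each attachment set, so the $A_i$ are uncountable dense subsets of the base and Cantor's back-and-forth gives nothing; nothing in the axioms makes it obvious that the $A_i$ are order-isomorphic by an argument shorter than the theorem itself. Second, and more seriously, even granting such a $\psi$, the hair-by-hair extension you describe is not automatically continuous at base points. Continuity there requires that $\psi$ (and your chosen hair parametrizations) transport the way the arcs $\gamma_x$ vary near each attachment point of $X_1$ to the corresponding behaviour in $X_2$; matching only the \emph{sets} $A_i$ gives no control on this, and axiom (4) of a comb is a property each $X_i$ satisfies, not a device that makes an arbitrary hair-by-hair map continuous -- it is something your constructed map must be engineered to respect. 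The ambient part has the same flavour: Schoenflies is indeed the right tool, but the uniform diameter control you mention across infinitely many accumulating channels is the substance of \cite[Section 4]{AO}, and the proposal offers no construction of it. In short, the proposal correctly identifies where the difficulties are, but does not overcome them, and should not be read as a proof of the cited theorems.
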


\section{Logarithmic coordinates and the class $\BlogP$} \label{sec:log}

A useful tool for studying functions in the Eremenko-Lyubich class $\B$ is the
 \emph{logarithmic change of coordinates}. 
 Let $f\in\B$ and let $D\subset\C$ be a Jordan domain
 that contains the set $\overline{\Sing(f^{-1})}$ as well as
 the values $0$ and $f(0)$. 
 Set $W := \C\setminus\overline{D}$. The connected components of 
  $\V := f^{-1}(W)$ are called the tracts of $f$. Each tract $T$
  is a Jordan domain whose boundary passes through $\infty$, and $f:T\to W$ 
  is a universal covering. If $f$ has finite order, then the
  number of tracts will be finite, but otherwise there may be
  infinitely many tracts (as is the case e.g.\ for $z\mapsto e^{e^z}$).
 
It is convenient to study $f$ in logarithmic coordinates. To do so, we set
  $H := \exp^{-1}(W)$ and $\T := \exp^{-1}(\V)$. 
  Each component of $\T$ is a simply connected domain whose boundary
  is  homeomorphic to $\R$ 
  such that both 'ends' of the boundaries have real parts converging to 
  $+\infty$. 
  Note that $\T$ and $H$ are invariant under translation by $2\pi i$.
 We can lift $f$ to a map 
 \[
  F: \T \to H,
 \]
 satisfying $\exp\circ F = f\circ\exp$. We may also assume that the function
 $F$ is chosen to be $2\pi i$-periodic. 
 We call  
 $F$ a \emph{logarithmic transform} of $f$.  By construction, 
 the following properties hold.

   \begin{enumerate}
    \item $H$ is a $2\pi i$-periodic unbounded Jordan domain that contains
     a right half-plane.   \label{item:H}
    \item $\T\neq\emptyset$ 
     is $2\pi i$-periodic and $\re z$ is bounded from below in
     $\T$.
    \item Each component $T$ of $\T$ is an 
      unbounded Jordan domain that is disjoint
      from all its $2\pi i\Z$-translates. 
      For each such $T$, the restriction
      $F:T\to H$ is a conformal 
      isomorphism whose continuous extension to the closure
      of $T$ in $\Ch$ satisfies $F(\infty)=\infty$. 
      ($T$ is called a \emph{tract of $F$}; we 
       denote the inverse of $F|_T$ by 
       $F_T^{-1}$.) \label{item:tracts}
    \item The components of $\T$ have pairwise disjoint closures and
      accumulate only at $\infty$; i.e.,
      if $z_n\in\T$ is a sequence of points all belonging to different
      components of $\T$, then $z_n\to\infty$.  \label{item:accumulatingatinfty}
    \item $F$ is $2\pi i$-periodic. \label{item:Fperiodic}
   \end{enumerate}

   Following \cite{RRRS}, let us denote 
    by $\BlogP$ the class of all functions
    \[ F:\T\to H, \]
   where $F$, $\T$ and $H$ have the properties
   (\ref{item:H}) to (\ref{item:Fperiodic}), regardless of whether
    $F$ arises as the logarithmic transform of a function $f\in\B$ or not. 
   The advantage of formulating our results in this more general setting
   is that they can also be applied e.g.\ to meromorphic functions having
   logarithmic singularities over infinity, while the proofs remain unchanged.  
  \begin{remark}
   To avoid confusion, we should note that, in \cite{R}, the notation $\Blog$
    is used for the class we denote by $\BlogP$, while in
    \cite{RRRS} and other subsequent papers, $\Blog$ denotes
    the class of functions satisfying
    (\ref{item:H}) to (\ref{item:accumulatingatinfty}), 
    but not necessarily
    (\ref{item:Fperiodic}). 
    While many of the results\footnote{
      The exceptions are
      Proposition \ref{prop:density} and those facts that depend on it,
      namely Corollary \ref{cor:bouquetheadstart} and the results of
      Section \ref{sec:proofgeneral}.} 
    we state for the class $\BlogP$ also hold in this
    more general setting, our main theorems 
    do not. (The condition of periodicity in (\ref{item:Fperiodic})
    could, however,
    be considerably weakened, at the expense of a more technical definition.)
  \end{remark}

  For $F\in\BlogP$, we define 
    \[ J(F) := \{z\in\ol{\T}: F^j(z)\in \ol{\T}\text{ for all $j\geq 0$} \} \] 
  Note that the continuous extension of $F|_T$ in item (3) exists 
  by Carath\'{e}odoryÕs Theorem (see \cite[Theorem 2.1]{CG}). Because $T$ is a Jordan domain, this extension is a homeomorphism, 
and in particular $F|_T$ extends continuously to a homeomorphism between the closures $\overline{T}$ and 
$\overline{H}$ (taken in $\mathbb{C}$). Together with property (4), this implies that $F$ extends continuously to the 
closure $\overline{\mathcal{T}}$ of $\mathcal{T}$ in $\mathbb{C}$.
   In analogy with our notation for entire functions in the class $\B$, we say
    that a function $F\in\BlogP$ is of \emph{disjoint-type} if 
    the boundaries of the tracts of $F$ do not intersect the boundary of $H$;
    i.e.\ if $\ol{\T}\subset H$. The following properties of
    disjoint type functions are elementary and well-known. 
  
  \begin{lem}[Properties of disjoint-type functions] 
   If $F:\T\to H$ is of disjoint type, then $F$ is uniformly expanding
    with respect to the hyperbolic metric in $H$. That is, there is
    a constant $C>1$ such that the derivative $\|DF(z)\|$ of
    $F$ with respect to the hyperbolic metric satisfies
    $\|DF(z)\|\geq C$ for all $z\in \ol{\T}$. 

   If $f\in\B$, then the following are equivalent:
    \begin{enumerate}
     \item $f$ is of disjoint type;
     \item $\Sing(f^{-1})$ is compactly contained in the immediate basin  
      of attraction of an attracting fixed point of $f$;
     \item $f$ has a logarithmic
       transform $F$ of disjoint type (in which case  we have
       $J(f) = \exp(J(F))$);
    \end{enumerate}
  \end{lem}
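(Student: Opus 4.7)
My plan is to separate the lemma into the uniform expansion statement and the three-way equivalence, handling the former via Schwarz--Pick and the latter via classical facts about $f\in\B$.

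For the uniform expansion, the key observation is that each restriction $F\colon T\to H$ is a conformal isomorphism of simply connected hyperbolic domains, hence a hyperbolic isometry from $(T,\rho_T)$ onto $(H,\rho_H)$. The isometry identity $|F'(z)|\rho_H(F(z)) = \rho_T(z)$ rewrites, after dividing by $\rho_H(z)$, as the pointwise formula
\[
\|DF(z)\| \;=\; \frac{\rho_T(z)}{\rho_H(z)}, \qquad z \in T,
\]
where $\|DF(z)\|$ is measured with respect to $\rho_H$ on both source and target. The disjoint-type hypothesis $\ol{\T}\subset H$ together with Schwarz--Pick for the inclusion $T\hookrightarrow H$ then gives $\rho_T>\rho_H$ strictly on $T$, so $\|DF\|>1$ pointwise, with continuous extension to $\ol{T}$.

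To upgrade this to a uniform constant $C>1$ on $\ol{\T}$, I would use the $2\pi i$-periodicity of $F$, $\T$, and $H$: the ratio $\rho_T/\rho_H$ descends to a continuous function on the cylinder $\ol{\T}/2\pi i\Z$, and on the part where $\re z$ is bounded it is bounded away from $1$ by continuity together with $\ol{\T}\subset H$. The main obstacle is the ``escaping end'' $\re z\to+\infty$, where both domains are unbounded and pointwise Schwarz--Pick does not produce a positive gap. Here I would exploit that $H$ contains a right half-plane while the tracts are disjoint from their $2\pi i\Z$-translates, and invoke standard comparisons of the hyperbolic metric on nested unbounded domains (or, equivalently, extract a normal-families limit of suitably rescaled tract maps using periodicity) to obtain $\rho_T/\rho_H\geq C>1$ at infinity as well.

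For the equivalences, the main input is the Eremenko--Lyubich theorem ruling out wandering and Baker domains in the class $\B$. Assuming (2), the forward orbit of the compact set $\ol{\Sing(f^{-1})}$ stays in $U$ and is attracted to $z_0$, so $\P(f)$ is a compact subset of $F(f)$, giving hyperbolicity; every Fatou component is pre-periodic, and since all singular values lie in $U$, the only periodic Fatou component is $U$ itself, after which the compact containment and the covering structure of $f$ on Fatou components force every component to equal $U$, giving (1). The converse $(1)\Rightarrow(2)$ follows from the standard classification of hyperbolic Fatou dynamics: a connected Fatou set for a hyperbolic $f\in\B$ must be the immediate basin of a single attracting fixed point, and $\ol{\Sing(f^{-1})}\subset\P(f)\subset F(f)$ is compact.

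For $(1)\Leftrightarrow(3)$, I would pick a Jordan domain $D\subset U$ with $f(\ol{D})\subset D$ containing $\ol{\Sing(f^{-1})}\cup\{0,f(0)\}$ (this exists inside the immediate basin of an attracting fixed point); then the tracts $\V=f^{-1}(\C\setminus\ol{D})$ satisfy $\ol{\V}\subset\C\setminus\ol{D}$, which in logarithmic coordinates reads $\ol{\T}\subset H$, i.e., $F$ is of disjoint type. The converse runs the same construction in reverse via $\exp$. Finally, the identity $J(f)=\exp(J(F))$ follows because $\ol{D}\subset F(f)$ forces every $z\in J(f)$ to keep all iterates in $\ol{\V}$, and these orbits lift uniquely under $\exp$ to orbits in $\ol{\T}$, matching the definition of $J(F)$.
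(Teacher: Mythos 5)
The paper does not actually give a proof here---it simply cites \cite[Lemma 3]{B} or \cite[Lemma 2.1]{RRRS} for the expansion statement (pointing to Lemma~\ref{lem:eremenkolyubich} as the key ingredient) and \cite[Proposition 2.7]{helenaconjugacy} for the three-way equivalence---so your self-contained sketch is necessarily more explicit than what appears in the text, and the question is whether it is correct and consistent with what those references do. For the expansion estimate, your identity $\|DF(z)\| = \rho_T(z)/\rho_H(z)$ together with Schwarz--Pick is exactly the standard route, and it is the right one; but the ``escaping end'' is in fact easier than you suggest, and invoking normal families or general nested-domain comparisons is both vague and overkill. Since each tract $T$ is disjoint from $T\pm 2\pi i$, every $z\in T$ has $\dist(z,\partial T)\le 2\pi$, so $\rho_T(z)\ge \frac{1}{4\pi}$ everywhere on $T$; meanwhile $H$ contains a right half-plane $H_K$, so $\rho_H(z)\le \rho_{H_K}(z) = \frac{1}{2(\re z - K)} \to 0$ as $\re z\to\infty$. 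Thus $\rho_T/\rho_H\to\infty$ uniformly as $\re z\to\infty$, and on the complementary region $\{\re z\le K_1\}$, periodicity and property~(\ref{item:accumulatingatinfty}) (tracts accumulate only at $\infty$) reduce matters to finitely many tract pieces with compact closure inside $H$, where the ratio (which tends to $+\infty$ at $\partial T$) has a positive infimum strictly greater than $1$. So your uniformity argument is correct in spirit but should be replaced by this direct two-line estimate; note also that the paper's own pointer to Lemma~\ref{lem:eremenkolyubich} is a slightly different, Euclidean-derivative angle on the same conclusion.

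For the equivalences your outline is reasonable, but the step you describe as ``the compact containment and the covering structure $\dots$ force every component to equal $U$'' is the genuinely nontrivial part of $(2)\Rightarrow(1)$ and is passed over too quickly. What one actually needs is to choose a Jordan domain $D$ with $\ol{\Sing(f^{-1})}\subset D$, $\ol D\subset U$ and $f(\ol D)\subset D$, observe that $f^{-1}(D)\supset\ol D$ and that $f^{-1}(D)=\C\setminus\ol{f^{-1}(\C\setminus\ol D)}$ is connected because the tracts are pairwise-disjoint unbounded Jordan domains accumulating only at infinity, and then conclude that $F(f)=\bigcup_n f^{-n}(D)$ is an increasing union of connected open sets, hence connected. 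Without this, ``forces every component to equal $U$'' is an assertion rather than an argument. Your treatment of $(1)\Leftrightarrow(3)$ and of $J(f)=\exp(J(F))$ via the same domain $D$ is fine and matches the standard construction. Also be careful with one small imprecision: the ratio $\rho_T/\rho_H$ does not ``extend continuously to $\ol T$''---it blows up at $\partial T$---so the claim $\|DF\|\ge C$ on $\ol{\T}$ should be read as an infimum over $\T$ (where the extension of $F$ to $\partial T$ maps into $\partial H$, on which $\rho_H$ is not defined).
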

  \begin{proof}
    The first claim is a consequence of the expansion estimate for
     functions in the class $\BlogP$ (see Lemma \ref{lem:eremenkolyubich} below); compare
     \cite[Lemma 3]{B} or \cite[Lemma 2.1]{RRRS} for details.

    For a proof of the second part, see 
     \cite[Proposition 2.7]{helenaconjugacy}.
  \end{proof}

 \subsection*{External addresses}
  The (Markov-type) partition of the domain of a function $F\in\BlogP$ by 
   the tracts of $F$ allows us to define symbolic dynamics. 

  \begin{defn}
   Given a function $F\in\BlogP$, we denote by $\A$ the set of tracts of $F$.
    This set $\A$ is called the \emph{symbolic alphabet} associated to $F$. 
  \end{defn}
  We note that, for practical purposes and also in other
   applications, it is
   usually more convenient and appropriate to use a symbolic set as
   address entries, rather than the set of tracts of $F$ itself. For example,
   suppose that $F$ is the logarithmic transform of a function
   $f\in\B$ which has finitely many, say $N$, tracts. Then
   $\A$ can be identified with the set of pairs $(s,j)$, where
   $s\in\Z$ and $j\in \{1,\dots,N\}$. Here $j$ identifies one of the tracts
   of $f$, and $s$ determines one of the countably many logarithmic lifts
   of this domain. 
   However, for our purposes it seems more natural and elegant to use the
   definition given above. 

 \begin{defn}[External addresses]
  Let $F\in\BlogP$, and let $z\in J(F)$. For each $j\geq 0$, let
   $T_j\in\A$ be the (unique) tract of $F$ with $F^j(z)\in \ol{T_j}$. 
   Then the sequence 
   $\extaddr(z) := T_0 T_1 T_2\dots$ is called the
   \emph{external address of $z$}. 

  Furthermore, we will refer to 
   any sequence $\saddr= T_0 T_1 T_2\dots \in \A^{\N_0}$ as an 
   \emph{(infinite) external
   address (for $F$)}, regardless of whether it is realized as the address 
   of a point $z\in J(F)$ or not. 
   For such an address $\saddr$, we denote the set of points
   $z\in J(F)$ with $\extaddr(z)=\saddr$ by 
    \[ J_{\saddr}(F) := \{z\in J(F): F^j(z)\in \ol{T_j}\text{ for all $j\geq 0$}\}.
              \]
 \end{defn}
 \begin{remark}
  External addresses are also referred to as \emph{itineraries}.
   We prefer not to use this terminology, on the one hand to
   stress the analogy with \emph{external angles} from polynomial dynamics,
   and on the other in order to not cause confusion with
   other types of itineraries (defined with respect to certain dynamical
   partitions). 
 \end{remark}

 The alphabet $\A$ is equipped with a natural order, corresponding to
  the \emph{vertical order} 
  (see \cite[p.325]{expcombinatorics})
  of the tracts of $F$ near infinity. 
  This gives rise to the \emph{lexicographic order} on the set
  $\A^{\N_0}$ of external addresses, which we also denote by ``$<$''. 

 If $f\in\BlogP$ and $F$ is a logarithmic transform of $f$, then
  we can also define \emph{external addresses for $f$} by identifying
  any two addresses $\saddr= T_0 T_1 T_2 \dots$ and $\saddr' = T_0' T_1' T_2'$ of
  $F$ with $T_0'=T_0+2\pi i k$ for some $k\in \Z$ and $T_j=T_j'$ for
  $j>0$. Note that this space is no longer linearly ordered, but carries
  a circular ordering. For a more natural (but equivalent) way of
  defining external addresses of $f$, without using logarithmic
  coordinates, see e.g.\ \cite[Section 2.3]{helenalanding}. 

 The following fact implies that there is a dense
  set of addresses for which 
  $J_{\saddr}(F)$ is nonempty.
 \begin{lem}[{\cite[Proposition 2.4]{R}}] \label{lem:periodic}
  Suppose that $\saddr\in\A^{\N_0}$ is a periodic address (i.e.,
   $s_{n+k}=s_n$ for some $k>0$ and all $n\geq 0$). Then
   $J_{\saddr}(F)\neq\emptyset$.
 \end{lem}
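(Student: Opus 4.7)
The plan is to realize $J_{\saddr}(F)$ as containing a periodic point of $F$. If $k$ is the period of $\saddr$, write $\saddr=T_0T_1T_2\ldots$ with $T_{j+k}=T_j$, and consider the composition of inverse branches
\[ G := F_{T_0}^{-1}\circ F_{T_1}^{-1}\circ\cdots\circ F_{T_{k-1}}^{-1}. \]
Any fixed point $z_0$ of $G$ automatically lies in $\ol{T_0}$ with $F^j(z_0)\in\ol{T_j}$ for $0\le j<k$ and $F^k(z_0)=z_0$, so that $\extaddr(z_0)=\saddr$ and $z_0\in J_{\saddr}(F)$. It therefore suffices to produce a fixed point of $G$ on a suitable closed domain.

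The first step is to pin down a closed subset of $\ol{H}$ that is invariant under $G$. Since $H$ contains a right half-plane and each tract $T$ is an unbounded Jordan domain whose boundary has both ends escaping to $+\infty$ in real part, the slice $T\cap\{\re z\le R'\}$ is bounded for every $R'$. Combined with the continuous extension of $F_T^{-1}$ from $\ol{H}$ to $\ol{T}$ satisfying $F_T^{-1}(\infty)=\infty$, this implies that for every $R'>0$ and every tract $T$ one has $F_T^{-1}\bigl(\ol{H}\cap\{\re w\ge R\}\bigr)\subset\{\re z\ge R'\}$ provided $R$ is taken large enough (depending on $T$ and $R'$). Since only the finitely many tracts $T_0,\ldots,T_{k-1}$ occur in our composition, one can choose $R$ so large that each $F_{T_j}^{-1}$ maps the closed half-plane $\ol{H_R}:=\{\re z\ge R\}$ into itself; in particular $G(\ol{H_R})\subset\ol{H_R}$.

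Next I invoke the Eremenko--Lyubich expansion estimate (e.g.\ \cite[Lemma 2.1]{RRRS} or \cite[Lemma 3]{B}), which gives a bound of the form $|(F_T^{-1})'(w)|\le C/(\re w-Q)$ for $\re w$ large, uniformly in the tract $T$. Enlarging $R$ if necessary, each $F_{T_j}^{-1}$ becomes a strict Lipschitz contraction on $\ol{H_R}$ with uniform constant $\lambda<1$, and hence so is $G$. As $\ol{H_R}$ is a closed, hence complete, subset of $\C$, the Banach contraction principle produces a unique fixed point $z_0\in\ol{H_R}$, which by the opening paragraph lies in $J_{\saddr}(F)$.

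The one genuinely delicate point is securing the invariance in the second paragraph: a priori the inverse branches $F_T^{-1}$ are defined on $\ol{H}$ with image in $\ol{T}$, and nothing forces $\ol{T}\subset\ol{H}$. What one is really exploiting here is the geometric fact that tracts \emph{open towards $+\infty$}, so that inverse branches push deep right half-planes far into $H$. Once that invariance is in hand, the Eremenko--Lyubich contraction estimate and Banach's theorem deliver the fixed point immediately.
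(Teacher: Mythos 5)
The key gap is in the forward-invariance claim in your second paragraph. You correctly note that for every $R'>0$ there is $R=R(T,R')$ with $F_T^{-1}\bigl(\ol{H}\cap\{\re w\ge R\}\bigr)\subset\{\re z\ge R'\}$, but it does not follow that one can ``choose $R$ so large that each $F_{T_j}^{-1}$ maps the closed half-plane $\ol{H_R}$ into itself''. That would require $R(T_j,R)\le R$ for all $j$, i.e.\ a fixed point of the implicit function $R'\mapsto R(T_j,R')$, and in general none exists. Already for $f(w)=e^w$ the logarithmic transform is $F(z)=e^z$ with $H$ a right half-plane and $F_T^{-1}$ a branch of $\log$, so $\re F_T^{-1}(w)=\log|w|$ and for $w=R$ real one gets $\re F_T^{-1}(R)=\log R<R$. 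Hence no half-plane $\ol{H_R}$ is forward-invariant under even a single inverse branch, let alone under $G$; the Banach fixed-point step has no complete $G$-invariant domain to act on and does not get started. Since $e^z$ is the most basic map the lemma must cover, this is a real failure, not a technicality. The Eremenko--Lyubich estimate you invoke does give strong contraction far to the right, but contraction and invariance are independent, and it is the invariance that is missing.

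To close the gap one needs a device that does not presuppose an invariant half-plane. In the disjoint-type case one can use $\ol{\T}\subset H$: the pullbacks $\bigl(F_{T_0}^{-1}\circ\cdots\circ F_{T_{n-1}}^{-1}\bigr)(\ol{H}\cup\{\infty\})$ form a nested sequence of compacta in $\Ch$, and the expansion estimate shows their intersection contains a finite point, which then lies in $J_{\saddr}(F)$. But Lemma \ref{lem:periodic} is stated for arbitrary $F\in\BlogP$, where $\ol{\T}\not\subset H$ in general and the nesting breaks; there one must control the pullback more carefully (e.g.\ by a normality argument for the compactified inverse branches, or by tracking real parts quantitatively along the whole orbit via the derivative bound) rather than asserting a single invariant half-plane. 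The paper itself does not reprove this statement but cites it from \cite[Proposition 2.4]{R}. As written, your invariance step is false, so the proof does not go through.
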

 \begin{remark}
  Proposition 2.4 in \cite{R} is stated for fixed addresses; the
   above statement follows by considering the iterate $F^k$. It is
   not difficult to see that the result is also true when the 
   address $\saddr$ contains only finitely many symbols. 
 \end{remark}

 \subsection*{Accumulation}
  Recall that a key property in the definition
   of a hairy arc is that
   no hair is isolated either from above or from below. For
   disjoint-type functions, 
   this will be immediate from the following simple fact.

 \begin{prop}[Accumulation from above and below]   \label{prop:density}
  Let $F\in\BlogP$ be of disjoint type, and let $z_0\in J(F)$.
   Then there exist sequences $z^-_n,z^+_n\in J(F)$ with
   $\addr(z^-_n)<\addr(z_0)<\addr(z^+_n)$ for all $n$ and
   $z^-_n\to z_0$, $z^+_n\to z_0$. 
 \end{prop}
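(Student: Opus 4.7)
The plan is to exploit the $2\pi i$-periodicity of the alphabet $\A$: for each tract $T_n$, the translates $T_n \pm 2\pi i$ are also tracts, lying immediately above and below $T_n$ in the vertical order. Writing $\saddr := \addr(z_0) = T_0 T_1 T_2 \ldots$ and $a_n := F^n(z_0) \in \ol{T_n}$, I would set
\[
 z_n^\pm := F_{T_0}^{-1} \circ F_{T_1}^{-1} \circ \cdots \circ F_{T_{n-1}}^{-1}(a_n \pm 2\pi i).
\]
This is well-defined, since $a_n \pm 2\pi i \in \ol{T_n \pm 2\pi i} \subset \ol{H}$ and each inverse branch $F_{T_j}^{-1}$ is defined on $\ol{H}$.

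The verification that $z_n^\pm \in J(F)$ with the desired address is straightforward. By $2\pi i$-periodicity of $F$, $F(a_n \pm 2\pi i) = F(a_n) = a_{n+1}$, so the orbit of $z_n^\pm$ passes through $\ol{T_k}$ for $k < n$, visits $\ol{T_n \pm 2\pi i}$ at step $n$, and agrees with the orbit of $z_0$ from step $n+1$ onwards. Hence
\[
 \addr(z_n^\pm) = T_0 \cdots T_{n-1}\,(T_n \pm 2\pi i)\, T_{n+1} T_{n+2} \cdots,
\]
which differs from $\saddr$ only at position $n$; since $T_n + 2\pi i$ lies above $T_n$ and $T_n - 2\pi i$ below in the vertical order, we obtain $\addr(z_n^-) < \saddr < \addr(z_n^+)$ lexicographically.

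For convergence I would invoke uniform expansion. Since $F$ is of disjoint type, $\|DF\| \geq C > 1$ on $\ol{\T}$ with respect to the hyperbolic metric $\rho_H$ on $H$, and each inverse branch $F_T^{-1}\colon \ol{H} \to \ol{T}$ contracts $\rho_H$-distances by the factor $C^{-1}$. Combined with the identity $z_0 = F_{T_0}^{-1} \circ \cdots \circ F_{T_{n-1}}^{-1}(a_n)$, this gives
\[
 \rho_H(z_n^\pm, z_0) \leq C^{-n}\,\rho_H(a_n \pm 2\pi i,\, a_n).
\]

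The main obstacle is bounding $\rho_H(z, z+2\pi i)$ uniformly for $z \in \ol{\T}$. I would handle this in two regimes. Fix $R$ so that $\{\Re z > R\} \subset H$. If $\Re z \geq R+1$, the vertical segment from $z$ to $z+2\pi i$ lies entirely in the right half-plane $\{\Re z > R\}$, whose hyperbolic density is $|dz|/(\Re z - R)$; by domain monotonicity this yields $\rho_H(z, z+2\pi i) \leq 2\pi$. Otherwise, $2\pi i$-periodicity lets me assume $0 \leq \Im z \leq 2\pi$, so $z$ belongs to the set $\ol{\T} \cap \{\Re z \leq R+1,\, 0 \leq \Im z \leq 2\pi\}$, which is closed and bounded, and by property~(\ref{item:accumulatingatinfty}) meets only finitely many tracts, hence is compact. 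On it the continuous function $z \mapsto \rho_H(z, z+2\pi i)$ is bounded. Combining the two regimes produces a uniform constant $M < \infty$ with $\rho_H(z_n^\pm, z_0) \leq M C^{-n} \to 0$, and since $\rho_H$ and the Euclidean metric are comparable on any compact subset of $H$, we conclude $z_n^\pm \to z_0$ in $\C$.
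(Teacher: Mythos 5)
Your proof is correct and takes essentially the same approach as the paper: both pull back $F^n(z_0)\pm 2\pi i$ along the inverse branch of $F^n$ determined by $\addr(z_0)$, and deduce convergence from uniform hyperbolic expansion together with a uniform bound on $\dist_H(z,z+2\pi i)$ (you simply spell out that last bound in more detail than the paper does).
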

 \begin{proof}
  Let $n\geq 1$, and let $\phi:H\to H$ be the branch of $F^{-n}$ that maps
   $F^n(z_0)$ to $z_0$. We define 
    \[ z^{\pm}_n := \phi(F^n(z_0)\pm 2\pi i). \]
  Then, by definition, we have $\addr(z^-_n)<\addr(z_0)<\addr(z^+_n)$ for
   all $n$. 
  Since $H$ contains a right half plane, and because $H$ and $J(F)$ are
   $2\pi i$-periodic, the hyperbolic distance 
   $\dist_H(z,z+2\pi i)$ is uniformly bounded independently of
   $z\in J(F)$. Since $F$ is a strict hyperbolic expansion, it follows that
   $z^{\pm}_n\to z_0$ as $n\to \infty$, as claimed. 
 \end{proof}

\section{Existence of hairs: Head-start conditions}  \label{sec:hairs}
 Since our proof of the main theorem will strongly use the characterization
  of hairs from the proof of 
  Theorem \ref{thm:hairs}, we shall review some of the definitions
  and concepts 
  here. This will also allow us to state our results in a more
  general form, using the notions from \cite{RRRS}.

  The essential idea of the proof is the following. Suppose
   $F$ is the logarithmic transform of a function of finite order.
   Then for any address $\saddr$,
   the set $J_{\saddr}(F)$ is naturally ordered according to
   ``escape speed'': if $z,w\in J_{\saddr}(F)$ are such that
   $\re w$ is significantly larger than $\re z$, then
   the same will be true for $F(w)$ and $F(z)$
   ($w$ has a ``head start'' over $z$). This idea is formalized
   in the following definition.

  \begin{defn}[Head-Start Condition]\label{defn:headstart}
 Let $F\in\BlogP$, and let 
    $\phi:\R\to\R$ be a (not necessarily strictly) increasing 
  continuous function with
  $\phi(x)>x$ for all $x\in\R$. We say that
   $F$ satisfies the 
  {\em uniform head-start condition for $\phi$} if:
 \begin{enumerate}
  \item For all tracts $T$ and $T'$ of $F$ and all 
     $z,w\in \ol{T}$ with $F(z),F(w)\in \ol{T'}$, 
  \[
    \re w >\phi(\re z)\ \Longrightarrow\ \re F(w)>\phi(\re F(z))\;.
   \]
  \item For all external addresses $\saddr\in\A^{\N_0}$ and 
    for all distinct
    $z,w\in J_{\saddr}$,  there 
    is $M\in\N_0$ such that 
    $\re F^{M}(z)> \phi(\re F^{M}(w))$ or
    $\re F^{M}(w) > \phi(\re F^{M}(z))$.
 \end{enumerate}
\end{defn}

The above-mentioned fact for finite-order functions can now be phrased
 as follows.

\begin{prop}[{\cite[Lemmas 1 and 4]{B} or \cite[Section 5]{RRRS}}]
 Let $f$ belong to the class $\B$ and have finite order, 
  or more generally be a finite composition
  of such functions. Then there are a logarithmic transform
  $F:\T\to H$ for $f$ and constants $M>1$, $K>0$
  such that $F$ satisfies the uniform head-start condition
  for $\phi(x) = M\cdot x + K$.

 If $f$ is of disjoint type, then $F$ can also be chosen of disjoint
  type. 
\end{prop}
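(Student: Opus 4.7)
The plan is to produce a convenient logarithmic transform $F\colon\T\to H$ of $f$ and then verify the two parts of the head-start condition directly. I focus on $f\in\B$ of finite order; the case of a finite composition reduces to this by composing estimates through the factors, with $\phi$ remaining linear. Starting from any logarithmic transform $F_{0}$ of $f$, I replace $H$ by the right half-plane $\{\re z>R_{0}\}$ with $R_{0}$ large and take $\T$ to be the corresponding preimage. When $f$ is of disjoint type, $\overline{\Sing(f^{-1})}$ lies in the immediate basin of an attracting fixed point, so for $R_{0}$ large enough the resulting transform $F$ automatically satisfies $\overline{\T}\subset H$ and is itself of disjoint type.

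The core analytical input, which is where finite order enters essentially, is the derivative estimate
\[
|F'(z)|\;\geq\;A\cdot\re F(z)\qquad\text{for all }z\in\overline{\T},
\]
for a constant $A>0$ depending only on $F$. This follows from Koebe's distortion theorem applied to the univalent inverse branch $F_{T}^{-1}\colon H\to T$, together with the observation that for a finite-order function each tract $T$ lies in a horizontal strip of imaginary height at most $2\pi$, so the Euclidean distance from $z\in T$ to $\partial T$ stays uniformly bounded. By enlarging $R_{0}$ if necessary one can simultaneously arrange $A\cdot R_{0}\geq 2$, yielding uniform Euclidean expansion $|F'|\geq 2$ on $\overline{\T}$.

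To verify item (1), suppose $z,w\in\overline{T}$ with $F(z),F(w)\in\overline{T'}$ and $\re w>M\,\re z+K$ for constants $M,K$ to be chosen. Since $T$ lies in a strip of imaginary height at most $2\pi$, I have $|w-z|\geq\re w-\re z$. Integrating $F'$ along a short arc in $T$ from $z$ to $w$ and applying Koebe distortion together with the derivative estimate yields $|F(w)-F(z)|\geq c\,(\re w-\re z)\cdot\re F(z)$ for an absolute constant $c>0$. Because $F(z)$ and $F(w)$ both lie in $\overline{T'}$, their imaginary parts differ by at most $2\pi$, and hence
\[
\re F(w)-\re F(z)\;\geq\;c\,(\re w-\re z)\cdot\re F(z)\,-\,2\pi.
\]
For $M\in(1,1+cR_{0})$ and $K$ sufficiently large this forces $\re F(w)>M\,\re F(z)+K$, which is exactly the required implication.

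Item (2) then follows from expansion: for distinct $z,w\in J_{\saddr}(F)$, the expansion estimate for functions in $\BlogP$ (see Lemma~\ref{lem:eremenkolyubich}) implies that the hyperbolic distance in $H$ between $F^{n}(z)$ and $F^{n}(w)$ tends to infinity. Since both iterates lie in the same tract $T_{n}$, which sits in a strip of imaginary height at most $2\pi$, their imaginary parts cannot separate by more than $2\pi$, so $|\re F^{n}(z)-\re F^{n}(w)|\to\infty$, giving the required head start at some step. The main obstacle is the derivative estimate of the second paragraph: without the finite-order hypothesis the Koebe ratios are too weak to yield $|F'|\gtrsim\re F$, and as the examples in \cite{RRRS} show, a linear $\phi$ then no longer suffices and must be replaced by a more slowly growing head-start function.
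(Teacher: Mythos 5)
Your argument hinges on the claim that ``for a finite-order function each tract $T$ lies in a horizontal strip of imaginary height at most $2\pi$.'' This is false in general. The $2\pi i$-periodicity and disjointness of the tracts from their translates only force the cross-section $T\cap\{\re z = x\}$ to have total length at most $2\pi$ for each fixed $x$; the tract itself can drift in the imaginary direction as $\re z\to\infty$ (picture a thin region following a curve like $\im z\approx \sqrt{\re z}$). What finite order actually gives, and what the cited proofs use, is the considerably weaker \emph{bounded slope} property, $|\im z-\im w|\leq \alpha\max(\re z,\re w)+\beta$ for $z,w$ in a common tract, together with \emph{bounded wiggling}: along the geodesic from $z_0\in T$ to $\infty$ one has $\re z>\re z_0/K-\mu$. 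Establishing these from the finite-order hypothesis, and then deducing the head-start condition from them, is exactly the content of \cite[Section~5]{RRRS} (and, under more restrictive geometric assumptions under which the strip picture is closer to correct, of \cite[Lemmas 1 and 4]{B}). Your deduction of item (1) and of item (2) both lean on the strip claim at essential points (to control $\im F(z)-\im F(w)$ in $T'$, and to pass from divergence of hyperbolic distance to divergence of the ratio of real parts), so neither goes through without it.

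A second, smaller misconception: the derivative estimate $|F'(z)|\gtrsim \re F(z)$ is the Eremenko--Lyubich estimate (Lemma~\ref{lem:eremenkolyubich} in this paper) and holds for \emph{every} $F\in\BlogP$, regardless of order. It comes from applying Koebe's $\tfrac14$-theorem to $F_T^{-1}$ on a disk around $F(z)$ of radius comparable to $\re F(z)$, and using that $T$ cannot contain any disk of radius exceeding $2\pi$ because $T$ is disjoint from $T+2\pi i$. Your closing remark that ``without the finite-order hypothesis the Koebe ratios are too weak to yield $|F'|\gtrsim\re F$'' is therefore wrong; what fails without finite order is not the expansion estimate but the control on the \emph{shape} of the tracts, which is what makes a linear $\phi$ possible (and which the counterexample of \cite[Theorem 8.4]{RRRS} shows cannot hold in general). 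The reduction to a half-plane codomain, the disjoint-type observation, and the broad use of expansion are all fine; the gap is that you have replaced the actual geometric input (bounded slope and wiggling) with an unwarranted and stronger one.
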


 As mentioned above, the proof of Theorem \ref{thm:hairs} relies on the idea
  of a ``speed ordering'' generated by the head-start condition. 
  More precisely:

 \begin{prop}[{\cite[Propositions 4.4 and 4.6]{RRRS}}]\label{prop:speedordering}
  Let $F\in\BlogP$ satisfy a uniform head-start condition, say for $\phi$, 
   and let $\saddr$ be an external address. We define
   a relation $\prec$ on $J_{\saddr}(F)\cup\{\infty\}$---the ``speed ordering''---%
   by
  \[ z\prec w \quad \Longleftrightarrow \quad
       \exists j\geq 0: F^j(w)> \phi(F^j(z)) \]
   and $z\prec\infty$ for all $z\in J_{\saddr}(F)$. 

  Then $\prec$ is a total order, and the order topology coincides with
   the usual topology on $J_{\saddr}(F)\cup \{\infty\}$. In particular,
   every connected component of $J_{\saddr}(F)\cup\{\infty\}$ is an arc. 

  Moreover, if $J_{\saddr}(F)\neq\emptyset$, then
   $J_{\saddr}(F)$ has a unique unbounded component $J_{\saddr}^{\infty}(F)$,
   and for every $z\in J_{\saddr}^{\infty}(F)$, the set
   $\{w\in J_{\saddr}(F)\cup\{\infty\}: z\prec w\}$ is an arc connecting $z$ to
   infinity. 
 \end{prop}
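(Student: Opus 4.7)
My plan is to verify the assertions of the proposition in turn, using the head-start axioms as the main technical input. First I would check that $\prec$ is a strict total order on $X:=J_{\saddr}(F)\cup\{\infty\}$. Irreflexivity follows from $\phi(x)>x$, and trichotomy on $J_{\saddr}(F)$ is exactly head-start condition (2). For transitivity, head-start axiom (1) propagates strict head-starts forward under $F$: since any two points of $J_{\saddr}(F)$ share tracts at every iterate, the witnessing strict inequality $\re F^M(w)>\phi(\re F^M(z))$ survives under every further application of $F$ and hence persists for all $j\geq M$. So if $z\prec w$ and $w\prec y$, one can pass to a common iterate $\ell$ where both strict inequalities hold; the monotonicity of $\phi$ combined with $\phi(x)>x$ then yields $\re F^\ell(y)>\phi(\re F^\ell(w))>\phi(\re F^\ell(z))$, i.e.\ $z\prec y$.

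Next I would show that the order topology on $X$ coincides with the subspace topology it inherits from $\Ch$. Continuity of each $F^j$ and of $\phi$ preserves the strict witnessing inequality on a small usual-neighborhood of $w$, so every subbasic $\prec$-ray is open in the usual topology; hence the order topology is coarser than the usual one. On the other hand, $J_{\saddr}(F)=\bigcap_{j\geq 0}F^{-j}(\ol{T_{s_j}})$ is closed in $\C$, so $X$ is compact in $\Ch$, while any total-order topology is automatically Hausdorff. The identity map from $X$ with the usual topology to $X$ with the order topology is thus a continuous bijection from a compact space onto a Hausdorff space, hence a homeomorphism, and the two topologies agree.

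Every connected component $C$ of $X$ is closed, hence compact. As $X$ now carries the order topology of $\prec$, $C$ is order-convex in $X$, with $\prec$-endpoints supplied by compactness; connectedness rules out jumps in $\prec$ inside $C$ (a jump would induce a clopen partition), so $C$ is a compact, densely ordered, Dedekind-complete linear continuum with two endpoints and is therefore homeomorphic to $[0,1]$---an arc. Assuming $J_{\saddr}(F)\neq\emptyset$, and provided the component $C_\infty$ of $\infty$ in $X$ is nondegenerate (see below), removing the endpoint $\infty$ of the arc $C_\infty$ leaves a connected subset of $J_{\saddr}(F)$ that is unbounded in $\C$; this is $J_{\saddr}^\infty(F)$, and it is the unique such component because any component of $J_{\saddr}(F)$ unbounded in $\C$ accumulates at $\infty$ in $\Ch$ and so is contained in $C_\infty$. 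Finally, for $z\in J_{\saddr}^\infty(F)$, the set $\{w\in X:z\prec w\}\cup\{z\}$ is a $\prec$-closed subinterval of the arc $C_\infty$, hence itself an arc from $z$ to $\infty$, which is the last assertion of the proposition.

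It remains to establish nondegeneracy of $C_\infty$ when $J_{\saddr}(F)\neq\emptyset$; this is the genuine obstacle, since the abstract order-topology formalism does not by itself prevent $J_{\saddr}(F)$ from being, say, totally disconnected. Here I would imitate the construction of \cite{RRRS}: starting from an arbitrary $z\in J_{\saddr}(F)$, compose the contracting inverse branches $F_{T_{s_0}}^{-1}\circ\cdots\circ F_{T_{s_{n-1}}}^{-1}$ and apply them to points of arbitrarily large real part in $\ol{T_{s_n}}$ (such points being supplied, for instance, by pulling back periodic orbits obtained from Lemma \ref{lem:periodic}) to produce a sequence $w_n\in J_{\saddr}(F)$ with $z\prec w_n$ and $\re w_n\to\infty$. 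This forces $\infty$ into the component of $z$ in $X$, so $C_\infty$ is nondegenerate and the proof is complete.
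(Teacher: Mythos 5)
Note first that the paper itself states this proposition as a citation from \cite{RRRS} and supplies no proof of it, so there is no internal argument for me to compare against; I am evaluating your proposal on its own terms and against the known argument in that reference.

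Your handling of the order and of the topology is essentially sound. Two minor repairs: since $\phi$ is only nondecreasing, the transitivity chain should read $\re F^\ell(y)>\phi(\re F^\ell(w))>\re F^\ell(w)>\phi(\re F^\ell(z))$ rather than asserting $\phi(\re F^\ell(w))>\phi(\re F^\ell(z))$; and to conclude that a nondegenerate compact connected linearly ordered space is homeomorphic to $[0,1]$ one also needs separability, which here comes for free from metrizability as a subspace of $\Ch$. The compact-to-Hausdorff identity-map trick for matching the order topology with the subspace topology is tidy and arguably simpler than a direct verification.

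The genuine gap is exactly where you flag it, but the sketch you offer does not close it. Two problems. First, applying $F_{T_{s_0}}^{-1}\circ\cdots\circ F_{T_{s_{n-1}}}^{-1}$ to a point of a periodic orbit obtained from Lemma~\ref{lem:periodic} produces a point whose external address is $T_{s_0}\cdots T_{s_{n-1}}$ followed by a \emph{periodic} tail, which is not $\saddr$ unless the $n$-th shift of $\saddr$ happens to be periodic; so the $w_n$ you produce are generally not in $J_{\saddr}(F)$ at all. Second, and more fundamentally, even granting a sequence $w_n\in J_{\saddr}(F)$ with $z\prec w_n$ and $\re w_n\to\infty$, this only shows that $\infty$ is not an isolated point of $X$; it does not make the component of $\infty$ nondegenerate. (A strictly increasing real sequence together with its limit is a compact linearly ordered space in which the limit is not isolated yet its component is a singleton; nothing in your argument rules out $J_{\saddr}(F)\cup\{\infty\}$ being of this type.) The argument in \cite{RRRS} instead constructs, for a given $z\in J_{\saddr}(F)$, a nested decreasing sequence of compact \emph{connected} subsets of $\Ch$---pullbacks under the inverse branches of suitable unbounded pieces of $\ol{T_{s_n}}$, controlled by the head-start function $\phi$ and the expansion estimate of Lemma~\ref{lem:eremenkolyubich}---whose intersection is precisely $\{w\in J_{\saddr}(F)\cup\{\infty\}:w\succeq z\}$. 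It is this nested-continua construction, not a density-of-points argument, that simultaneously produces the unbounded component and shows that the upper cone is an arc; the order-topological formalism alone cannot manufacture connectedness.
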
 

 Note that
  the above result immediately
  implies Theorem \ref{thm:hairs}, since $J_{\saddr}(F)$ is connected when
  $F$ is of disjoint type. To cover also the case where
  $F$ is not of disjoint type, let us state the following
  result, which is a consequence of the preceding proposition. 

\begin{cor}[Hairy Absorbing Sets {\cite[Theorem 4.7]{RRRS}}]
\label{cor:absorbing}
 Suppose that $F\in\BlogP$ satisfies a uniform head-start condition. Then 
  there exists a closed $2\pi i$-periodic
   subset $X\subset J(F)$ with the following
  properties:
  \begin{enumerate} 
   \item $F(X)\subset X$;
   \item each connected component $C$ of $X$ is a closed arc to infinity;
     \label{item:arcs}
   \item there exists $K'>0$ with the following property:
      If 
      $z\in J(F)$ such that $\re F^j(z)\geq K'$ for all $j\geq 0$, then
      $z\in X$. 
             \label{item:absorbing}
  \end{enumerate}
  If 
   $F$ is of disjoint type, then we may choose $X=J(F)$. 
\end{cor}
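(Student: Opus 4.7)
The plan is to take
\[
X := \bigcup \{J_{\saddr}^{\infty}(F) : \saddr \in \A^{\N_0},\ J_{\saddr}(F)\neq\emptyset\},
\]
the union of the unbounded components provided by Proposition~\ref{prop:speedordering}, and to verify the required conditions in turn. The structural properties will come almost directly from Proposition~\ref{prop:speedordering} and the axioms of $\BlogP$; the genuine difficulty is closedness of $X$.

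The easier properties go as follows. Different addresses $\saddr\neq\saddr'$ give sets $J_{\saddr}^{\infty}(F)$ and $J_{\saddr'}^{\infty}(F)$ with disjoint closures in $\C$, since even the closures of the initial tracts are pairwise disjoint by property~(4) of $\BlogP$; hence the $J_{\saddr}^{\infty}(F)$ are exactly the connected components of $X$, and each is a closed arc to infinity by Proposition~\ref{prop:speedordering}. The $2\pi i$-periodicity of $X$ is inherited from the $2\pi i$-periodicity of $F$ (property~(5)), which induces a shift $T\mapsto T+2\pi i$ on the alphabet $\A$. For $F(X)\subset X$, take $z\in J_{\saddr}^{\infty}(F)$ with $\saddr = T_0 T_1 T_2\dots$: Proposition~\ref{prop:speedordering} supplies an arc from $z$ to $\infty$ in $J_{\saddr}(F)\cup\{\infty\}$, and since by property~(3) the map $F$ extends continuously on $\ol{T_0}$ in $\Ch$ with $F(\infty)=\infty$, the image of this arc is a connected unbounded subset of $J_{\saddr'}(F)$ containing $F(z)$, where $\saddr' = T_1 T_2\dots$. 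Thus $F(z)$ lies in the unique unbounded component $J_{\saddr'}^{\infty}(F)\subset X$.

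For the absorbing property, I would choose $K'$ comfortably larger than the constants associated with the head-start function $\phi$. Given $z\in J(F)$ with $\re F^j(z)\geq K'$ for all $j\geq 0$, Proposition~\ref{prop:speedordering} reduces the task to exhibiting a single comparison point $w\in J_{\addr(z)}(F)$ with $z\prec w$. Such a $w$ I would construct by picking $w_N$ in the $N$-th tract of $\addr(z)$ with $\re w_N$ much larger than $\phi(\re F^N(z))$, and then pulling $w_N$ back along the orbit of $z$ using the conformal inverse branches $F_{T_j}^{-1}$ on the successive tracts $T_j$; condition~(1) of the head-start definition propagates the initial head-start inductively back to $w := F_{T_0}^{-1}\circ\dots\circ F_{T_{N-1}}^{-1}(w_N)$, yielding $z\prec w$ and hence $z\in J_{\addr(z)}^{\infty}(F)\subset X$.

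The main obstacle is closedness. Suppose $z_n\in X$ with $z_n\to z$; then $z\in J(F)$, and by continuity of each $F^j$ combined with the disjointness of tract closures in property~(4), one has, for each fixed $j$, $\addr(z_n)_j = \addr(z)_j$ for all $n$ sufficiently large. This yields only convergence of $\saddr_n := \addr(z_n)$ to $\saddr := \addr(z)$ in the product topology, \emph{not} eventual equality, so one cannot directly invoke closedness of a single $J_{\saddr}^{\infty}(F)$. The plan is to carry along each $z_n$ a comparison point $w_n$ sitting far out on $J_{\saddr_n}^{\infty}(F)$, with $\re F^j(w_n)$ staying uniformly large for $j$ up to a cutoff $N(n)\to\infty$; extracting a convergent subsequence $w_n\to w$, the uniform head-start condition applied entrywise in the limit will force $\addr(w)=\saddr$ and $z\prec w$, whence $z\in J_{\saddr}^{\infty}(F)\subset X$ by the previous paragraph. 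Making this ``limit of arcs'' construction rigorous, in particular ensuring that the strict head-start inequalities survive the passage to the limit (perhaps by building in an extra margin of head-start into the $w_n$), is the technical heart of the proof.
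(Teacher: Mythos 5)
The paper does not prove this corollary: it is imported verbatim as \cite[Theorem 4.7]{RRRS}, so there is no ``paper's own proof'' to compare against. Your choice of $X$ as the union of the unbounded components $J_{\saddr}^{\infty}(F)$ is the natural one (and, for disjoint type, does collapse to $J(F)$), but two of your three ``hard'' steps have genuine flaws.

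First, the absorbing step has the inequality pointing the wrong way. Proposition~\ref{prop:speedordering} tells you that for $w\in J_{\saddr}^{\infty}(F)$, the set $\{u: w\prec u\}\cup\{\infty\}$ is an arc, and since it is connected and meets the unbounded component at $w$, it lies entirely inside $J_{\saddr}^{\infty}(F)\cup\{\infty\}$. So the inference you actually have available is: if $z\succeq w$ for some $w$ already known to lie in $J_{\saddr}^{\infty}(F)$, then $z\in J_{\saddr}^{\infty}(F)$. Establishing $z\prec w$ for some $w$ far out on the hair gives nothing, because every point of every \emph{bounded} component of $J_{\saddr}(F)$ also satisfies $z\prec w$ for all $w$ on the hair (bounded components sit strictly below the finite endpoint $e_{\saddr}:=\min J_{\saddr}^{\infty}(F)$ in the speed ordering). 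The real content of the absorbing property is that the hypothesis $\re F^j(z)\geq K'$ forces $z\succeq e_{\saddr}$, i.e.\ that $z$ dominates the endpoint of the hair; nothing in your sketch produces such a lower comparison point.

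Second, even if the direction were corrected, the pullback $w=F_{T_0}^{-1}\circ\cdots\circ F_{T_{N-1}}^{-1}(w_N)$ with $w_N\in T_N$ does not lie in $J_{\saddr}(F)$: you only control $F^j(w)$ for $j\leq N$, and the forward orbit of $w_N$ is unconstrained, so $\addr(w)$ agrees with $\saddr$ only up to entry $N$. The speed ordering is a relation on the fiber $J_{\saddr}(F)$, so such a $w$ is not an admissible comparison point. You would need to take $w_N\in J_{\sigma^N(\saddr)}(F)$ (which is nonempty, since $z$ witnesses it and the unbounded component $J_{\sigma^N(\saddr)}^{\infty}(F)$ has points of arbitrarily large real part); then $F^N$ restricted to $\ol{T_0}\cap\cdots\cap F^{-(N-1)}(\ol{T_{N-1}})$ is a homeomorphism carrying unbounded components to unbounded components, so one could also control which component $w$ falls into. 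This, combined with the corrected direction of comparison, is what the argument actually hinges on. Third, for closedness you correctly identify the difficulty and leave it open; note that it is not even obvious that $\bigcup_{\saddr}J_{\saddr}^{\infty}(F)$ is closed, since the endpoints $e_{\saddr_n}$ of nearby hairs need not converge to $e_{\saddr}$ as $\saddr_n\to\saddr$ in the product topology, and this is precisely what must be ruled out via a Hausdorff-limit argument of the kind you gesture at. The easy structural parts ($2\pi i$-periodicity, forward invariance, components being closed arcs, and the disjoint-type reduction) are fine as written.
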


 The goal of this paper is to show that $X$ can be chosen to be
  a {\cantorbouquet}. 
 
\section{Dynamical compactification} \label{sec:comb}

 We now wish to compactify the Julia set of a disjoint-type entire
  function as in our theorem in such a way that the resulting
  set will turn out to be a one-sided hairy arc (where the
  original Julia set coincides precisely with the union of
  hairs). 

 Essentially, this is done as follows: It is easy to compactify
  the space of external addresses, by adding 
  ``intermediate addresses'', such that the resulting space is
  homeomorphic to an arc. The desired space is then obtained by
  using this arc of addresses as the base of a comb, with the
  hair at address $\saddr$ attached at $\saddr$. 

 In fact, such a compactification of the dynamical plane can be defined
  completely generally for functions in the class $\B$ (or $\BlogP$),
  without assuming that the escaping set is a union of hairs, or that
  the function is hyperbolic. This type of compactification
  has already been used extensively in the exponential family;
  compare e.g.\ \cite[Section 2]{nonlanding} and
  \cite[Section 2]{expcombinatorics}, although it does not seem clear whether
  the precise details have been published previously.
  It is likely that this construction will be useful in future
  applications, and we hence use this opportunity to give a detailed
  account in the general setting.

 The construction 
  consists of two steps: Firstly, we must compactify the
  space of addresses $\A^{\N_0}$ 
  for a given map $F\in\BlogP$ to the desired
  arc $\Sequt$. Then we define a topology on 
  $\Ht := \ol{H}\cup \Sequt$ with the desired properties. 

  Finally, we shortly discuss the corresponding results in the
   original dynamical plane of an entire function $f\in\B$
   (for future reference). 
  
 \smallskip

  We begin with the construction of the compactification
   $\Sequt$.

 \begin{thm}
  There exists a totally ordered set $\Sequt\supset \A^{\N_0}$
   (where the order on $\Sequt$ agrees with lexicographic order
   on $\A^{\N_0}$) with the following properties:
  \begin{enumerate}
   \item With the order topology, $\Sequt$ is homeomorphic to a line segment.
     In particular, $\Sequt$ is compact and order-complete 
     (i.e., every nonempty subset that is bounded from above
      has a supremum). 
   \item $\A^{\N_0}$ is dense in $\Sequt$. 
  \end{enumerate}

  (Furthermore, this compactification is unique; i.e.\ 
    any other  ordered set with these properties is order-isomorphic to
    $\Sequt$, with the order-isomorphism restricting to the identity
    on $\A^{\N_0}$.) 
 \end{thm}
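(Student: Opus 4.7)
The plan is to realize $\Sequt$ as a Dedekind-type completion of the lexicographically ordered set $\A^{\N_0}$ and then to invoke the classical characterization of the closed interval as the unique nondegenerate compact, connected, separable, linearly ordered topological space. The key preliminary observation is that, since $\T$ is $2\pi i$-periodic, for every tract $T\in\A$ the translates $T\pm 2\pi i$ are again tracts lying strictly above and below $T$ in the vertical order; hence $\A$ has neither a minimum nor a maximum. Furthermore $\A$ is countable, being a family of pairwise disjoint Jordan domains in $\C$.

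Using the absence of extremes in $\A$, I would first verify that $\A^{\N_0}$ is a dense linear order without endpoints: given $\saddr<\taddr$ and letting $k$ denote the first position where they differ, either there is $T\in\A$ with $\saddr_k<T<\taddr_k$ and a mediating address is constructed directly, or $\taddr_k$ is the immediate successor of $\saddr_k$ in $\A$, in which case one raises $\saddr$ at position $k+1$ (possible because $\A$ has no maximum) to produce a mediator. The same construction with a constant tail shows that eventually constant addresses form a countable order-dense subset of $\A^{\N_0}$.

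Next I would define
\[
  \Sequt := \{\,A \subseteq \A^{\N_0} : A \text{ is downward closed and contains no maximum element}\,\},
\]
ordered by inclusion, and identify each $\saddr\in\A^{\N_0}$ with $A_{\saddr}:=\{\taddr\in\A^{\N_0}:\taddr<\saddr\}$; density of $\A^{\N_0}$ ensures $A_{\saddr}$ has no maximum, and the embedding clearly intertwines lexicographic order with inclusion. Any two downward-closed subsets of a linear order are $\subseteq$-comparable (if $a\in A\setminus B$, downward closure of $B$ forces every element of $B$ to lie below $a$, hence $B\subseteq A$), so $\Sequt$ is linearly ordered with minimum $\emptyset$ and maximum $\A^{\N_0}$; moreover arbitrary unions of elements of $\Sequt$ remain in $\Sequt$, yielding order-completeness and hence compactness in the order topology. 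To verify the no-jump property, given $A\subsetneq B$ in $\Sequt$ one picks $b\in B\setminus A$, uses the fact that $B$ has no maximum to choose $c\in B$ with $c>b$, and then invokes density of $\A^{\N_0}$ to find $x\in\A^{\N_0}$ with $b<x<c$; the element $A_x$ then lies strictly between $A$ and $B$. This same argument shows $\A^{\N_0}$ is order-dense in $\Sequt$, and restricting $x$ to eventually-constant addresses gives a countable dense subset.

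By the classical characterization, $\Sequt$ is homeomorphic to $[0,1]$, which is assertion (1); density was just shown, which is (2). For uniqueness, any other ordered set $\Sequt'$ with the stated properties is an order-complete linear order containing $\A^{\N_0}$ as an order-dense subset with matching minimum and maximum, so it is canonically order-isomorphic to $\Sequt$ via $A\mapsto \sup_{\Sequt'}A$, with the isomorphism restricting to the identity on $\A^{\N_0}$. The main technical step is the verification that $\A^{\N_0}$ has no jumps, as this is what ultimately forces $\Sequt$ to be connected; this is exactly where the periodicity hypothesis~(\ref{item:Fperiodic}) enters, mirroring its use in Proposition~\ref{prop:density}.
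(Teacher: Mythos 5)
Your proof is correct, and it takes a genuinely different (more abstract) route than the paper. The paper constructs $\Sequt$ \emph{explicitly}: it first forms an extended alphabet $\ol{\A}$ by inserting ``intermediate entries'' (Dedekind cuts of $\A$), then defines $\Sequ$ to be $\A^{\N_0}$ together with all finite ``intermediate external addresses'' $T_0\cdots T_{n-1}S_n$ (with $S_n\in\ol{\A}\setminus\A$), sets $\Sequt=\Sequ\cup\{\pm\infty\}$, and proves $\Sequ$ is order-isomorphic to $\R$ either by citing a general topological theorem or by an explicit symbolic-dynamics argument (an expanding map $h$ on a union of intervals $I(a)\subset\R$, $a\in\A$). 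You instead pass directly to the Dedekind--MacNeille completion of $\A^{\N_0}$ (downward-closed subsets with no greatest element, ordered by inclusion), check that $\A^{\N_0}$ is a countable-based dense linear order without endpoints (using that periodicity forces $\A$ to have no extremes), and then invoke the classical order-theoretic characterization of $[0,1]$. Both arguments are valid and ultimately produce the same (unique) object; what the paper's version buys is a concrete description of the elements of $\Sequt$, which is then used directly in the next step to define neighborhood bases for points of $\Sequt$ in the compactification $\tilde H$ (the cases there are indexed by whether the address is infinite, intermediate, or $\pm\infty$). Your abstract construction proves the theorem as stated just as well, but if you intend to carry on with the rest of the paper you would still want to identify the cut completion with the explicit intermediate-address model, so a sentence pointing out this identification would be worth adding.

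One minor point to tighten: in your no-jumps verification for $\A^{\N_0}$, in the case where $\taddr_k$ is the immediate successor of $\saddr_k$, say explicitly that the mediating address is $\saddr_0\cdots\saddr_k\,U\,U\,U\cdots$ for any $U>\saddr_{k+1}$ (which exists since $\A$ has no maximum); as written, ``raises $\saddr$ at position $k+1$'' is a little terse, and making the address eventually constant is exactly what you need later for separability.
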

 This result is actually (a special case of)
  a standard fact from the theory of ordered spaces. However, it is possible---%
  and useful in the following---to give an explicit description of the
  space $\Sequt$. We comment below 
  on the particularly simple case where the number of 
  tracts
  is finite up to translations by multiples of $2\pi i$, as is the case for
  functions of finite order.

 To begin, let $F:\T\to H$ be a function in the class $\BlogP$. Recall
  that the symbolic alphabet 
  $\A$ is the set of tracts of $F$ (that is, the set of
  components $T$ of $\T$), equipped
  with a natural total order. Note also that $\A$ is countable. 

 Now we form an extended (also totally ordered) 
  alphabet $\ol{\A}$ by adding ``intermediate'' entries, corresponding to
  ``Dedekind cuts'' of the set $\A$.

 \begin{defn}[Intermediate entries]
  An \emph{intermediate entry} is a pair $(A^-,A^+)$ with
   $\A=A^-\cup A^+$,
    $A^-, A^+\neq\emptyset$
    and $a^-<a^+$ for all $a^-\in A^-$ and $a^+\in A^+$. 

  We define $\ol{\A}$ to be the union of $\A$ and the set of all intermediate
   entries. 
 \end{defn}

 \begin{lem}[Total order on {$\ol{\A}$}]
   There is a natural total order on $\ol{\A}$ that agrees with the
   vertical order on $\A$, and $\ol{\A}$ is order-complete.
 \end{lem}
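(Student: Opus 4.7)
The plan is to extend the vertical order from $\A$ to $\ol{\A}$ using the Dedekind-cut structure of the intermediate entries, verify the order axioms by routine case analysis, and then establish order-completeness by constructing $\sup S$ for every nonempty bounded-above $S\subseteq\ol{\A}$.

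For the order itself, on $\A$ I would use the existing vertical order. For $a\in\A$ and an intermediate entry $(A^-,A^+)$, I declare $a<(A^-,A^+)$ if $a\in A^-$ and $a>(A^-,A^+)$ if $a\in A^+$; exactly one holds since $\A=A^-\sqcup A^+$. Between two intermediate entries, I declare $(A^-_1,A^+_1)<(A^-_2,A^+_2)$ if $A^-_1\subsetneq A^-_2$, and equal if $A^-_1=A^-_2$. The only genuinely interesting step in checking the order axioms is trichotomy between two intermediate entries: if neither of $A^-_1,A^-_2$ contains the other, one picks $a_1\in A^-_1\cap A^+_2$ and $a_2\in A^-_2\cap A^+_1$ and derives $a_1<a_2$ and $a_2<a_1$ simultaneously from the defining inequalities of the two cuts, a contradiction. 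Transitivity is an eight-case check that reduces in each instance to the observation that every $A^-$ is downward closed and every $A^+$ is upward closed in $\A$.

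For order-completeness, let $S\subseteq\ol{\A}$ be nonempty and bounded above. If $S$ has a maximum in $\ol{\A}$, that is its supremum; otherwise set
\[
A^-_S := \{a\in\A : a\leq s \text{ for some } s\in S\}, \qquad A^+_S := \A\setminus A^-_S.
\]
$A^-_S$ is nonempty because $S$ is, and $A^+_S$ is nonempty because the set $\A$ of tracts, being $2\pi i$-periodic, has no maximum, so any upper bound $u\in\ol{\A}$ admits an $a'\in\A$ strictly above it, and such $a'$ automatically lies in $A^+_S$. The inequality $A^-_S<A^+_S$ is immediate from transitivity, so $(A^-_S,A^+_S)$ is a legitimate intermediate entry and an upper bound for $S$ (the possibility that $(A^-_S,A^+_S)\in S$ would force it to be $\max S$, contradicting the assumption that $S$ has no maximum). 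To see it is the least such upper bound, suppose $v<(A^-_S,A^+_S)$ were also an upper bound. If $v=a\in\A$, then $a\in A^-_S$ yields some $s_0\in S$ with $a\leq s_0$, while $v=a$ being an upper bound gives $s_0\leq a$; antisymmetry then forces $a=s_0\in S$ and hence $a=\max S$, a contradiction. If instead $v=(B^-,B^+)$ with $B^-\subsetneq A^-_S$, pick $a_0\in A^-_S\setminus B^-\subseteq B^+$ and $s_0\in S$ with $a_0\leq s_0$; then $s_0\leq v$ forces $a_0\in B^-$ (whether $s_0$ is an element of $\A$ or an intermediate entry), contradicting $a_0\in B^+$.

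The main subtlety to keep in mind is that an intermediate entry $(A^-,A^+)$ and the element $m=\max A^-\in\A$, whenever this maximum exists, are two distinct elements of $\ol{\A}$ with no other element of $\ol{\A}$ strictly between them. Consequently the supremum construction must split carefully into the cases ``$S$ has a maximum'' and ``$S$ has no maximum''; otherwise one would overshoot the genuine least upper bound $m=\max S\in S$ by the adjacent intermediate entry $(A^-_S,A^+_S)$.
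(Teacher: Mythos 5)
Your proposal is correct and takes essentially the same approach as the paper: extend the vertical order via the Dedekind-cut structure of intermediate entries, and prove order-completeness by splitting into whether $B$ has a maximum and, if not, taking the supremum to be the cut $(A^-,A^+)$ with $A^+=\{a\in\A: b<a\text{ for all }b\in B\}$. The paper simply states this construction and says ``it is easy to check''; you supply the routine verifications (trichotomy, nonemptiness of $A^\pm$ using $2\pi i$-periodicity, and minimality of the upper bound), including the same remark the paper makes about each $a\in\A$ being isolated between two adjacent intermediate entries.
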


 Note that, when there are only finitely many tracts (up to $2\pi i\Z$-translations),
  this construction simply adds an intermediate entry between any pair of adjacent
  tracts. Otherwise, there will also be intermediate entries corresponding to ``limit points''
  of sequences of tracts.

 It may be worth pointing out that the construction differs from the standard construction of
  the real numbers from the rationals using Dedekind cuts, in that every element
  $a\in\A$ will be isolated in the space $\ol{\A}$. More precisely, there are two intermediate
  entries, above and below $a$, which separate $a$ from all other elements of $\A$. 

 Nonetheless, the proof of order-completeness remains essentially the same:
  Let $B\subset\ol{\A}$ be bounded and nonempty; we must show the existence
  of $\sup B$. If $B$ has a maximal element, then we are done. Otherwise,
  set 
   \[ A^+ := \{a\in \A: b<a\text{ for all $b\in B$}\}\quad\text{and}\quad
      A^- := \A\setminus A^+. \]
  Then it is easy to check that
   $(A^-,A^+)=\sup B$. 

 \smallskip

 We can now describe the compactification $\Sequt$ of the space of addresses as follows.

 \begin{defn}
 A (finite) sequence
   $\saddr = T_0 T_1 \dots T_{n-1} S_n$, where $n\geq 0$,
   $T_j\in \A$ for $0\leq j\leq n-1$, and where $S_n\in \ol{\A}\setminus\A$ is an intermediate entry,
  is called an \emph{intermediate external address}. 

  We define $\Sequ$ to be the union of the set
   $\A^{\N_0}$ of all external addresses and the set of all intermediate external addresses. 
 \end{defn}

\begin{lem}
Then (with 
   respect to lexicographic order), the space $\Sequ$ is order-isomorphic to $\R$. 

  In particular, the space $\Sequt := \Sequ\cup\{-\infty,+\infty\}$ is compact when equipped with
   the order topology (where $-\infty<\saddr<+\infty$ for all $\saddr\in\Sequ$), and homeomorphic to   
   $[0,1]$. 
 \end{lem}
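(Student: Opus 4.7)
The plan is to invoke the Cantor--Dedekind characterization of the real line: a nonempty linear order is order-isomorphic to $\R$ if and only if it is dense, has no endpoints, is separable, and is order-complete. I would verify these four properties for $\Sequ$ in turn; the claim about $\Sequt$ then follows by adjoining two extremal points and observing that $[-\infty,+\infty]$ is homeomorphic to $[0,1]$ under the order topology.

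\emph{No endpoints and density.} Since $\T$ is $2\pi i$-periodic and translation by $2\pi i$ shifts the vertical order on tracts by a nonzero amount, the alphabet $\A$ has no minimum or maximum, and hence neither does $\Sequ$ (one simply prepends a smaller or larger initial symbol to any given address). For density, given $\saddr<\saddr'$ in $\Sequ$, let $n$ be the first position at which they disagree. Using the order-completeness of $\ol{\A}$ established in the previous lemma, a short case analysis on whether the disagreeing entries at position $n$ lie in $\A$ or in $\ol{\A}\setminus\A$ produces an intermediate entry $S\in\ol{\A}\setminus\A$ lying strictly between them; the finite intermediate address $T_0\cdots T_{n-1}S$ then lies strictly between $\saddr$ and $\saddr'$ in $\Sequ$.

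\emph{Order-completeness.} Given $B\subset\Sequ$ nonempty and bounded above, I construct $\sup B$ inductively on position. Set $T_0:=\sup_{\saddr\in B}s_0$, computed in the order-complete set $\ol{\A}$. If $T_0\in\ol{\A}\setminus\A$, then the length-one intermediate address $(T_0)$ is $\sup B$; otherwise $T_0\in\A$ and we restrict to the bounded nonempty subfamily $\{\saddr\in B:s_0=T_0\}$ and iterate at position $1$. The procedure either terminates at some finite $n$ with an intermediate address $T_0\cdots T_{n-1}S_n\in\Sequ$, or produces an infinite address in $\A^{\N_0}$; in either case one verifies directly that the resulting element is indeed $\sup B$.

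\emph{Separability.} The alphabet $\A$ is countable, and I expect this step---the only place where the structural property of $\BlogP$-tracts is used---to be the main technical point. The key observation is that property (4) of $\BlogP$ (tracts have pairwise disjoint closures and accumulate only at $\infty$) forces the vertical order on $\A$ to be discrete, since any sequence of distinct tracts converging in the vertical order would accumulate at a point with bounded imaginary part, contradicting disjointness. Hence $\ol{\A}$ is countable, and the countable family of all finite intermediate addresses $T_0\cdots T_{n-1}S_n\in\Sequ$ is dense in $\Sequ$ by the density argument above. Applying the Cantor--Dedekind theorem gives the order-isomorphism $\Sequ\cong\R$, whence $\Sequt\cong[-\infty,+\infty]\cong[0,1]$, as claimed.
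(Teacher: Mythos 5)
Your approach---verifying the Cantor--Dedekind axioms (density, no endpoints, separability, order-completeness)---is genuinely different from the paper's. The paper does not check these axioms at all; instead it embeds $\A$ as a discrete (but not necessarily closed) subset of $\R$, builds a piecewise expanding map $h$ whose branches are indexed by $\A$, and observes that the itinerary map of $h$ is an order-isomorphism $\R\to\Sequ$, which gives the result directly. Your verifications of the no-endpoints and order-completeness conditions are sound; note in particular that the value $T_0=\sup_{\saddr\in B}s_0$, if it lies in $\A$, is automatically attained, because each element of $\A$ is isolated in $\ol{\A}$. The density step needs a small repair: when the first disagreeing entries $s_n<s_n'$ are adjacent in $\ol{\A}$ (for instance $s_n=(A^-,A^+)$ intermediate and $s_n'=\min A^+$), there is no intermediate entry strictly between them at position $n$, and one must descend to position $n+1$; this is routine but should not be glossed over.

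The real gap is in the separability argument. You assert that property (4) of $\BlogP$ forces the vertical order on $\A$ to be discrete, so that $\ol{\A}$ is countable. This does not follow: distinct tracts can accumulate in the vertical order while still tending to $\infty$. For example, consider a tract $T$ with $\ol{T}\subset\{0\leq\Im z\leq 1\}$ together with tracts $T_n$ ($n\geq 1$) with $\ol{T_n}\subset\{\re z>n,\ 1+\tfrac{1}{n+1}\leq\Im z\leq 1+\tfrac{1}{n}\}$ (plus their $2\pi i\Z$-translates); the closures are pairwise disjoint and any sequence of points in distinct components tends to $\infty$, so property (4) holds, yet $T$ has no immediate successor in the vertical order. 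Thus the order on $\A$ need not be discrete, and your argument for countability of $\ol{\A}$ fails; the paper's remark that $\A$ is identified with a ``discrete---but not necessarily closed---subset of $\R$'' is there precisely to accommodate such accumulation. Separability of $\Sequ$ is nonetheless true, but should be proved differently: the set of \emph{eventually constant} addresses $T_0\cdots T_{n-1}T_{n-1}T_{n-1}\cdots$ (or the set of periodic addresses) is countable and dense in $\A^{\N_0}$, and $\A^{\N_0}$ is in turn dense in $\Sequ$ by your density argument. With that substitution the Cantor--Dedekind route goes through.
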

 \begin{proof}
  It is not difficult to see that the space $\Sequt$ is compact, metrizable and connected, and hence 
   homeomorphic and order-isomorphic to $[0,1]$ by a general theorem of topology \cite[Theorems 6.16 and 6.17]{nadler}. Instead, 
   we sketch a simple direct proof with a dynamical flavor. 

  Because $\A$ is countable, and by definition of $\ol{\A}$, we can partition the real line $\R$ into sets
   $I(a)\subset\R$, $a\in\ol{\A}$ such that:
  \begin{itemize}
   \item For each $a\in\A$, the set $I(a)$ is an open interval.
   \item For each $a\in\ol{\A}\setminus\A$, the set $I(a)$ is a singleton.
   \item The sets $I(a)$ are pairwise disjoint and their order on the real line coincides with the
     order on $\ol{\A}$. That is, if $a,b\in\ol{\A}$ and $a<b$, then $x<y$ for all $x\in I(a)$ and $y\in I(b)$. 
   \item $\bigcup_{a\in\ol{\A}} I(a)=\R$. 
  \end{itemize}
  (This can be done by identifying $\A$ with a countable discrete---%
    but not necessarily closed---subset of $\R$, and choosing an
    interval $I(a)$ around each $a\in\A$ such that the complement of the union of these intervals
    consists of singletons. We leave the details to the reader.)

   Now define a continuous function 
     \[ h: \bigcup_{a\in\A} I(a) \to \R \]
   such that $h'(x)\geq 2$ everywhere and such that
   $h(I(a))=\R$ for each $a\in\A$ (so 
   $h|_{I(a)}:I(a)\to\R$ is a diffeomorphism). 
   Then the desired homeomorphism between
   $\R$ and the space $\Sequ$ is provided by the symbolic dynamics of the map $h$. More precisely, for
   $x\in\R$, let $\psi(x)$ be the (unique) finite or infinite sequence $\saddr=s_0 s_1\dots$ satisfying
   $h^j(x)\in I(s_j)$ (whenever defined). It is easy to check that this map is indeed an order-isomorphism
   between $\R$ and $\Sequ$. 
 \end{proof}

 \smallskip

 It now remains to define a topology on the set
   $\tilde{H} := \ol{H}\cup \Sequt$. This topology will agree with the 
   induced topology on
   $\ol{H}$, so we only need to specify a neighborhood base for every 
   $\saddr\in\Sequt$. This can be done
   as follows. Denote by $H_R$ the right half plane 
   $H_R = \{a+ib: a>R, b\in\R\}$; we will always assume
   $R$ to be chosen sufficiently large that $H_R\subset H$. 

  \begin{enumerate}
   \item Let $\saddr= T_0 T_1 T_2\dots \in\A^{\N_0}$ be an infinite 
    external address, and let $n\geq 0$. Let
      $R>0$, and let $V$ be the unbounded component of $T_n\cap H_{R}$. If $R$ was chosen sufficiently large,
      then there is a branch $\psi$ of $F^{-n}$ defined on $H_R$ such that
      $U:=\psi(V)$ satisfies $F^j(U)\subset T_j$ for $0\leq j\leq n$. For all such $n$ and $R$, the 
      set 
      \[ \tilde{U} := U\cup \{\taddr\in \Sequt: \text{the first $n$ entries of $\taddr$ are $T_0 T_1  \dots T_n$}\} \]
      is a neighborhood of $\saddr$ by definition.
   \item Let $\saddr=T_0 T_1 \dots T_{n-1} S_n$ ($n\geq 0$) be an intermediate external address. Let
          $T_n^+,T_n^-\in\A$ with $T_n^-<S_n<T_n^+$, and let $T_{n+1}^+,
          T_{n+1}^-\in \A$ be arbitrary. 
          Also let $R$ be sufficiently large; then
   $H_R\setminus (\ol{F_{T_n^-}^{-1}(T_{n+1}^-)\cup F_{T_n^+}^{-1}(T_{n+1}^+)})$ 
          has a unique unbounded connected
          component $V$ that lies 
          between $F_{T_n^-}^{-1}(T_{n+1}^-)$ and $F_{T_n^+}^{-1}(T_{n+1}^+)$. Choosing
      $R$ larger, if necessary, 
       we set 
          $U := \psi(V)$, where $\psi$ is a branch of
          $F^{-n}$  chosen such that 
          $F^j(U)\subset T_j$ for $0\leq j\leq n-1$.

         For every such $T_n^-$, $T_n^+$ and $R$, the set
      \[ \tilde{U} := U\cup \{\taddr \in\Sequt:
           T_0 \dots T_{n-1} T_n^- T_{n+1}^- < \taddr < T_0 \dots T_{n-1} T_n^+ T_{n+1}^+ \text{ (lexicographically)}
              \} \]
      is a neighborhood of $\saddr$ by definition.
  \item Let $\saddr=+\infty$ (the definition for $-\infty$ is analogous), 
    let $T\in\A$ and let $\Gamma$ be a Jordan arc that connects
      $\partial T$ to $\partial H$.
      Let $U$ be the connected component of
      $\ol{H}\setminus (\ol{T}\cup \Gamma)$
      that contains points ``above'' $T$ (i.e., that contains a 
      sequence $a+ib_n$, with $a$ fixed and $b_n\to +\infty$). 

     Then, for all $T$ and $\Gamma$ as above, the set
     \[ \tilde{U} := U \cup \{+\infty\}\cup \{\taddr\in\Sequt: \text{the first entry of $\taddr$ is larger than $T$}\} \]
     is a neighborhood of $+\infty$ by definition.     
  \end{enumerate}
 It is not difficult to check that each of the neighborhoods described above is in fact an open subset of
   $\tilde{H}$. 

 \begin{prop}[Topology of $\tilde{H}$]
   Equipped with the topology described above, the space $\tilde{H}$ is homeomorphic to the closed unit disk.
 \end{prop}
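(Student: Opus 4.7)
The plan is to construct an explicit homeomorphism $\Psi:\Ht \to \ol{\D}$, viewing $\ol H$ as topologically a closed half-plane and $\Sequt$ as a ``semicircle at infinity'' completing it to a disk. First I would fix an open arc $J\subset\partial\D$ with distinct endpoints $p^\pm$ so that $\ol\D\setminus J$ is a topological closed half-plane, and choose a homeomorphism $\Phi_0:\ol H\to \ol\D\setminus J$ with the compatibility that, for each tract $T\in\A$, $\Phi_0(\ol T)$ accumulates on $J$ at an assigned point $q_T$, and the assignment $T\mapsto q_T$ is order-preserving (for the natural vertical order of tracts). Such a $\Phi_0$ exists because the tracts have pairwise disjoint closures and accumulate only at $\infty$ (property~(4) of $\BlogP$), so the single end of each $\ol T$ at infinity gives a single accumulation point on $J$; the map may be constructed using a Riemann map of $H$ onto $\D$, or by hand. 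Then I define $\Psi$ to extend $\Phi_0$ by the unique order-preserving homeomorphism $\Sequt\to\ol J$ which sends $\pm\infty$ to $p^\pm$ and each constant address $T\,T\,T\ldots$ to $q_T$; this is a bijection because both $\Sequt$ and $\ol J$ are arcs with endpoints.

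Before verifying $\Psi$ is a homeomorphism I would establish Hausdorffness and compactness of $\Ht$. Hausdorffness is routine from the neighborhood bases (1)--(3): two points of $\ol H$ are separated by the ambient topology of $\C$; a point of $\ol H$ and an address $\saddr\in\Sequt$ by taking $R$ large enough that the $H_R$-based neighborhood of $\saddr$ misses a fixed small ball around the finite point; and two distinct addresses by inserting an intermediate address strictly between them and using the corresponding basis sets. For compactness I argue sequentially: given $(x_n)\subset\Ht$, I pass to a subsequence that is either contained in $\Sequt$ (use compactness of $\Sequt\cong [0,1]$), bounded in $\ol H$ (standard), or unbounded in $\ol H$. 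In the last case, for each $j\ge 0$ let $T_n^{(j)}\in\A$ be the tract whose closure contains $F^j(x_n)$; by compactness of $\ol\A$ and a diagonal extraction the sequence $T_n^{(j)}$ converges for every $j$ to some $S^{(j)}\in\ol\A$, producing a limit address $\saddr\in\Sequt$.

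With $\Ht$ compact Hausdorff, it suffices to show $\Psi$ is continuous; it is then automatically a homeomorphism onto $\ol\D$. Continuity on $\ol H$ is built into $\Phi_0$. The main obstacle is continuity at $\saddr\in\Sequt$, where I must show that each basis neighborhood $\ti U$ of type (1)--(3) has $\Psi(\ti U)$ contained in a preassigned neighborhood of $\Psi(\saddr)$ in $\ol\D$. For $\pm\infty$ this is essentially built into basis (3). For an infinite address $\saddr=T_0T_1\ldots$ it reduces to showing that the nested pullbacks $\psi(V)$ of right half-planes $V\subset T_n\cap H_R$ through the chain of tracts $T_0,\ldots,T_{n-1}$ have diameters (in $\Phi_0$-coordinates) tending to zero as $n,R\to\infty$; this follows from the hyperbolic contraction of inverse branches of $F$ on $H$ (a standard expansion estimate in the class $\BlogP$) together with the compatibility of $\Phi_0$ with the tract structure. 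For intermediate addresses the same argument applies once one notes that the bracketing tracts $T_n^\pm$ can be chosen arbitrarily close in $\ol\A$ to the Dedekind cut $S_n$, so that $\psi(V)$ is squeezed into an arbitrarily narrow region around $\Psi(\saddr)$ in $\Phi_0$-coordinates.
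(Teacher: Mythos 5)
The proposal contains a fundamental inconsistency in its key construction, so it does not work as stated.

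The central step is to fix a homeomorphism $\Phi_0:\ol H\to\ol\D\setminus J$ such that, for each tract $T$, $\Phi_0(\ol T)$ has a \emph{single} accumulation point $q_T$ on $J$, and then to glue on an order isomorphism $\Sequt\to\ol J$ sending the constant address $TTT\dots$ to $q_T$. This compatibility requirement is incompatible with continuity of the resulting map $\Psi$. Indeed, fix a non-constant infinite address $\saddr=T_0T_1T_2\dots$, so that $\Psi(\saddr)\neq q_{T_0}$. Every basic neighbourhood $\tilde U$ of $\saddr$ in $\Ht$ has $\tilde U\cap\ol H\subset\ol{T_0}$ (by the definition in item (1) of the topology). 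Thus $\Phi_0(\tilde U\cap\ol H)\subset\Phi_0(\ol{T_0})$, and being an unbounded subset of $\ol H$, this set must accumulate on $\ol J$ \emph{at the point $q_{T_0}$ only}, by your hypothesis on $\Phi_0$. In particular it contains points arbitrarily close to $q_{T_0}$, and hence is not contained in any small neighbourhood of $\Psi(\saddr)$ in $\ol\D$. So $\Psi$ cannot be continuous at $\saddr$. What is actually true (and required) is that the closure of $\ol{T_0}$ in $\Ht$ contains the \emph{entire} sub-arc of $\Sequt$ of addresses with first entry $T_0$ (plus the two adjacent length-one intermediate addresses), so $\Phi_0(\ol{T_0})$ should accumulate on the whole corresponding sub-arc of $\ol J$, not on a point. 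The justification you give for existence of $\Phi_0$ --- that $\ol T$ has a single topological end, so a single accumulation point --- is also a non sequitur: $\ol H$ itself has one end, but $\Phi_0(\ol H)=\ol\D\setminus J$ accumulates on all of $\ol J$. Constructing a $\Phi_0$ with the corrected compatibility property (accumulation on prescribed nested sub-arcs at every pullback level) is essentially equivalent to the proposition; in particular a Riemann map of $H$ cannot serve, since it collapses the whole boundary end at $\infty$ to a single prime end.

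There are also several secondary gaps. First, $\ol\D\setminus J$ with $J$ an \emph{open} arc is not a manifold with boundary (the endpoints $p^\pm$ have no half-disk neighbourhood), so it is not a topological closed half-plane; you want $\ol\D\setminus\ol J$. Second, the compactness argument applies the iterate $F^j(x_n)$ to arbitrary unbounded $x_n\in\ol H$, but $F^j(x_n)$ is only defined while the orbit stays in $\ol\T$; points of $\ol H\setminus\ol\T$, and orbits that leave $\ol\T$, must be handled separately. Third, $\ol\A$ is \emph{not} compact in the order topology (it has no maximal or minimal element, as $\A$ is unbounded like $\Z$), so the diagonal extraction is unjustified; one must also allow convergence to $\pm\infty\in\Sequt$. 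Finally, the appeal to hyperbolic contraction to conclude that $\Psi$-images of pullbacks shrink in the Euclidean metric of $\ol\D$ does not work as stated: the pullbacks $\psi(V)$ have infinite hyperbolic diameter (they are unbounded), and near $\partial\D$ small hyperbolic size does not force small Euclidean size. By contrast, the paper sidesteps all explicit constructions by verifying that $\Ht$ is a compact, locally connected metric space whose boundary $\partial H\cup\Sequt$ is a circle, and then appeals to the Kline sphere characterisation after gluing in a disk; this is a genuinely different and more robust route.
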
 
 \begin{proof}[Sketch of proof]
   By Urysohn's metrizability theorem \cite[page 125, Theorem 16]{kelley}, 
   we see that $\tilde{H}$ is metrizable. 
   Furthermore, the space is compact.
   (We only need to show that every sequence $z_n\in\ol{H}$ with $z_n\to\infty$
    in $\C$ has a convergent subsequence
   in $\tilde{H}$; this is not difficult to verify.) Since the neighborhoods given are connected, the space is
   locally connected. 

  Furthermore, let us denote by $\partial\tilde{H}$ the set $\partial H\cup \Sequt$, which is topologically a 
   circle. Suppose $\gamma$ is a ``crosscut'', that is,
   an arc that connects two points of $\partial\tilde{H}$, with all points of $\gamma$
   except for the endpoints belonging to $H$. Then one can verify that 
   $\tilde{H}\setminus\gamma$ has exactly two components, each of which intersects the boundary circle
   $\partial\tilde{H}$ in an
   arc. From this, it follows that $\tilde{H}$ is homeomorphic to the closed unit disk. (E.g.\, glue
   a disk inside the boundary circle, and use the Kline sphere 
   characterization \cite{bingkline}
   to show that the resulting space is homeomorphic to the $2$-sphere.)
 \end{proof}

 Let $T$ be a tract of $F$, and let $\tilde{T}$ be the closure of $T$ in
  $\tilde{H}$. Then it follows from the definitions that the homeomorphism
  $F:\ol{T}\to\ol{H}$ extends to a homeomorphism
  $F:\tilde{T}\to\tilde{H}$. 

\subsection*{Properties}
 The topology we introduced---and 
  particularly its analog in the original dynamical plane---has
  a number of further useful properties, again similarly to the
  setting of exponential maps. For example, any
  dynamically natural curve (e.g.\ one which does not contain escaping 
  points) that tends to infinity will have a
  unique endpoint on the boundary circle. However, we shall restrict 
  to those facts that are relevant
  to our paper.

 \begin{prop}
  Let $F\in\BlogP$, and let $\saddr$ be an external address. If $z_n\in J_{\saddr}(F)$ is
   a sequence with 
   $\re z_n\to\infty$, then $z_n\to \saddr$ in the topology of $\tilde{H}$.
 
  In particular, $J_{\saddr}(F)\cup\{\saddr\}$ is a compact subset of $\tilde{H}$. If
   $F$ is of disjoint type, then $J_{\saddr}(F)\cup\{\saddr\}$ is connected. 
 \end{prop}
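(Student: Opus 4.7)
The plan is to handle the three assertions in order. For the convergence statement, I first use the shape of tracts: since each tract $T$ is a Jordan domain whose boundary escapes to $+\infty$ at both ends in real part and $\re z$ is bounded below on $\T$, the set $\ol T\cap\{\re z\leq M\}$ is bounded for every $M$. Thus any sequence in $\ol T$ escaping to $\infty$ in $\widehat\C$ has real parts tending to $+\infty$. Combining this with the Carath\'eodory extension $F:\ol{T_0}\to\ol H$ satisfying $F(\infty)=\infty$, the hypothesis $\re z_k\to\infty$ gives $|F(z_k)|\to\infty$, and since $F(z_k)\in\ol{T_1}$ the shape property yields $\re F(z_k)\to\infty$. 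Iterating, $\re F^j(z_k)\to\infty$ for every fixed $j\geq 0$. Now fix a basic neighborhood $\tilde U$ of $\saddr$ at level $n$ with radius $R$; by construction $\tilde U\supset\psi_n(V)$, where $\psi_n:=F^{-1}_{T_0}\circ\cdots\circ F^{-1}_{T_{n-1}}$ and $V$ is the unbounded component of $T_n\cap H_R$. Since $z_k\in J_{\saddr}(F)$ implies $z_k=\psi_n(F^n(z_k))$ and $F^n(z_k)\in V$ for large $k$, we obtain $z_k\in\tilde U$, proving the first claim.

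For compactness, it suffices to show that $J_{\saddr}(F)\cup\{\saddr\}$ is closed in the compact Hausdorff space $\tilde H$. Let $w_k\to w$ with $w_k$ in this set. If $w\in\C$, then eventually $w_k\in J_{\saddr}(F)\subset\ol{T_0}$; continuity of $F$ on $\ol{T_0}$ together with closedness of each $\ol{T_j}$ gives, by induction, $F^j(w)\in\ol{T_j}$ for all $j$, so $w\in J_{\saddr}(F)$. If $w\in\Sequt$, then $w_k$ leaves every compact subset of $\ol H$, so $|w_k|\to\infty$; the first part then forces $w_k\to\saddr$, and Hausdorffness gives $w=\saddr$.

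For connectedness in the disjoint-type case, the idea is to realize $J_{\saddr}(F)\cup\{\saddr\}$ as a nested intersection of connected compacta in $\tilde H$. Using the extension of $F$ to a homeomorphism $\tilde T\to\tilde H$ on each tract (recorded immediately after the topology proposition), compose inverse branches to obtain a continuous $\psi_n:\tilde H\to\tilde T_0\subset\tilde H$, and set $L_n:=\psi_n(\tilde T_n)$. Since $\tilde T_n$ is compact and connected, so is $L_n$, and $L_{n+1}\subset L_n$ because $F^{-1}_{T_n}(\tilde T_{n+1})\subset\tilde T_n$. Unwinding the definitions, $L_n\cap\ol H=\{z\in\ol{T_0}:F^j(z)\in\ol{T_j}\text{ for }0\leq j\leq n\}$, while $L_n\cap\Sequt$ is the closed interval of addresses whose first $n+1$ entries are $T_0,\dots,T_n$, which contracts to $\{\saddr\}$. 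Hence $\bigcap_n L_n=J_{\saddr}(F)\cup\{\saddr\}$, and a nested intersection of connected compacta in a Hausdorff space is connected. The subtlety that requires care is verifying continuity of $\psi_n$ on the compactification; this is exactly what the topology of $\tilde H$ was designed to provide, so the argument reduces to careful bookkeeping.
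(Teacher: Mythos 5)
Your proof is essentially correct, but it routes the first claim differently from the paper: you argue \emph{direct convergence} $z_k\to\saddr$ by verifying that every basic neighborhood $\tilde U$ of $\saddr$ eventually contains $z_k$, whereas the paper argues by \emph{exclusion}—observing that $J_{\saddr}(F)$ cannot accumulate at any $\taddr\in\Sequt$ with $\taddr\neq\saddr$, and then invoking compactness of $\tilde H$. The exclusion argument is slightly slicker because it sidesteps the one step in your proof that is stated without justification, namely that ``$F^n(z_k)\in V$ for large $k$.'' This is true but needs two sub-observations that you elide: first, $F^n(z_k)$ is eventually in $T_n$ and not on $\partial T_n$ (since $F^{n+1}(z_k)\in\ol{T_{n+1}}$ has $\re F^{n+1}(z_k)\to\infty$, while $F(\partial T_n)=\partial H$ has bounded real part, as $H$ contains a right half-plane); and second, points of $T_n$ of sufficiently large modulus lie in the \emph{unbounded} component $V$ of $T_n\cap H_R$, which can be seen via a Riemann map $\ol{T_n}\to\ol{\D}$ in $\Ch$ (the set $T_n\cap\{\re z\le R\}$ is bounded, so its image stays away from the boundary point corresponding to $\infty$). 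In the paper's approach, one simply chooses the defining data of a basic neighborhood of a given $\taddr\neq\saddr$ so that the neighborhood is disjoint from $J_{\saddr}(F)$ far out, which is more robust. For the connectedness assertion, your argument and the paper's are morally identical: the paper takes a nested intersection of closed, connected, unbounded sets in $\C$ (equivalently, compact connected sets in $\Ch$), and you take the same nested intersection in $\tilde H$, using the extension $\psi_n:\tilde H\to\tilde T_0$; working in $\tilde H$ is arguably cleaner since it lands directly on $J_{\saddr}(F)\cup\{\saddr\}$. Your compactness argument is fine.
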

 \begin{proof}
  It follows from the definition of the topology on $\tilde{H}$ that points in
   $J_{\saddr}(F)$ cannot accumulate on any element of $\Sequt$ apart from $\saddr$, which implies
   the first two claims. The fact that $J_{\saddr}(F)\cup\{\saddr\}$ is connected when $F$ is of disjoint type
   follows from the fact that $J_{\saddr}(F)$ is the countable intersection of closed, unbounded, connected
   sets by definition, and hence every connected component of $J_{\saddr}(F)$ is unbounded.
   (In fact it follows from \cite{eremenkoproperty} that $J_{\saddr}(F)$ is connected as a subset of
    $\C$, but we shall not require this.)
 \end{proof}

 \begin{prop}[Hairy subsets of $J(F)$] \label{prop:precomb}
  Let $F\in\BlogP$, and let $X\subset J(F)$ be a closed set such that 
  \begin{enumerate} 
     \item for every $\saddr\in \A^{\N_0}$, the set
       $X_{\saddr} := J_{\saddr}(F)\cap X$ is a closed arc to infinity;
   \item the set of addresses $\saddr \in \A^{\N_0}$ with
     $X_{\saddr}\neq\emptyset$ is dense in $\A^{\N_0}$. 
  \end{enumerate}
  (If $F$ satisfies a uniform head-start condition, then
    these properties are satisfied by the set $X$ from Corollary \ref{cor:absorbing}.) 

  Let us denote by $\tilde{X}$ the closure of $X$ in the space $\tilde{H}$. 
   Then $\tilde{X}=X\cup\Sequt$ is a continuum and, with $B:=\Sequt$, 
   we have:
   \begin{enumerate}
     \item \label{item:arcclosure}       
        The closure  of every component of $\tilde{X}\setminus B = X$ is an arc,
         with exactly one endpoint in $B$;
     \item \label{item:distinctcomponents}
       distinct components of $X$ have disjoint closures in $\tilde{X}$;
     \item \label{item:densehairs} the set $X$ is dense in $\tilde{X}$. 
   \end{enumerate}
 \end{prop}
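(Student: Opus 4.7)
The plan is to exploit the preceding proposition, which describes how points in $J_\saddr(F)$ accumulate in the compactification $\tilde H$, and thereby reduce everything to routine topological bookkeeping.

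First I would pin down $\tilde X$. Since $X$ is closed in $\C$ with $X \subset \ol H$, any sequence $z_n \in X$ converging to some $\zeta \in \tilde H$ either has $\zeta \in X$ or $\zeta \in \Sequt$; hence $\tilde X \subset X \cup \Sequt$. Conversely, for every $\saddr \in \A^{\N_0}$ with $X_\saddr \neq \emptyset$ the hair $X_\saddr$ is unbounded in $\re$, so the preceding proposition gives $\saddr \in \tilde X$. Such addresses are dense in $\A^{\N_0}$ by hypothesis (2), and $\A^{\N_0}$ is dense in $\Sequt$; hence they are dense in $\Sequt$ and, $\tilde X$ being closed, $\Sequt \subset \tilde X$. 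This simultaneously establishes property (\ref{item:densehairs}): $X$ is dense in $\tilde X = X \cup \Sequt$.

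Next I would identify the connected components of $X = \tilde X \setminus B$ with the nonempty hairs $X_\saddr$. Each $X_\saddr$ is a closed arc (hence connected) by hypothesis (1), and the hairs are pairwise disjoint since each point of $J(F)$ has a unique external address. The preceding proposition implies that no point of $X_\saddr$ accumulates at any element of $\Sequt$ other than $\saddr$; combined with the closedness of $X_\saddr$ in $\C$, this gives that its closure in $\tilde H$ is exactly $X_\saddr \cup \{\saddr\}$, a closed arc with unique endpoint $\saddr \in B$. This is property (\ref{item:arcclosure}), and property (\ref{item:distinctcomponents}) is immediate: for $\saddr \neq \taddr$ the closures $X_\saddr \cup \{\saddr\}$ and $X_\taddr \cup \{\taddr\}$ are disjoint, since they meet $B$ in different points and are already disjoint inside $X$.

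Finally, $\tilde X$ is a continuum. Compactness is clear as $\tilde X$ is a closed subset of the compact Hausdorff space $\tilde H$. For connectedness, set
\[
 Y \;:=\; \Sequt \;\cup\; \bigcup_{\saddr:\,X_\saddr \neq \emptyset} \bigl(X_\saddr \cup \{\saddr\}\bigr).
\]
This is a union of connected sets each meeting the connected set $\Sequt$, and so is connected; since $Y$ is dense in $\tilde X$, its closure $\tilde X$ is connected as well. The main obstacle throughout is the first step: checking that the topology on $\tilde H$ really does behave as expected---that closures in $\C$ agree with closures in $\tilde H$ on $\ol H$, and that a point of $X_\saddr$ with large real part cannot accumulate at any element of $\Sequt$ apart from $\saddr$. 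Both points are exactly what the preceding proposition delivers, so once it is in hand the rest of the argument is purely formal.
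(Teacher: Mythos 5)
Your argument for the main assertions follows the paper's own proof essentially line for line, just with the implicit steps spelled out (the density chain ``realized addresses dense in $\A^{\N_0}$ dense in $\Sequt$'' forcing $\Sequt\subset\tilde X$, and the previous unnumbered proposition supplying both the unique accumulation point $\saddr$ for each hair and the resulting decomposition of $\tilde X$ into $B$ together with arcs each attached to $B$). That is all fine; the only small redundancy is that after you have shown $\tilde X = X\cup\Sequt$, your set $Y$ already equals $\tilde X$, so passing to a closure is unnecessary.

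The one thing you leave out is the parenthetical assertion in the statement: that when $F$ satisfies a uniform head-start condition, the set $X$ from Corollary~\ref{cor:absorbing} does satisfy hypotheses (1) and (2). Hypothesis (1) for that $X$ comes from Corollary~\ref{cor:absorbing}, but hypothesis (2) (density of the addresses with $X_\saddr\neq\emptyset$) requires a separate argument. The paper gets it from Lemma~\ref{lem:periodic} applied to the restriction of $F$ to $F^{-1}(H_R)$: every periodic address is realized by a point all of whose forward iterates have real part at least $R$, hence lies in $X$ by the absorbing property, and periodic addresses are dense. You should add this if you want a complete proof of the proposition as stated.
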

 \begin{proof}
  $\tilde{H}$ is a compact metric space, and hence so is
  $\tilde{X}$. By the second assumption on $X$, we have
  $\Sequt\subset \tilde{X}$. Hence
  $\tilde{X}=X\cup \Sequt$ is connected, and 
  can be written as the
  disjoint union of the arc $B$ and the (half-open)
  rays $X_{\saddr} := J_{\saddr}(F)\cap X$, with each ray
  ending at the point $\saddr\in B$ by the previous Proposition. This establishes
  the first two claims, and the third is satisfied by definition.

 Finally, suppose that
  $F$ satisfies a uniform head-start condition and let $X$ be the set from
   Corollary \ref{cor:absorbing} and let $R>0$. By Lemma \ref{lem:periodic} (applied
   to the restriction of $F$ to $F^{-1}(H_R)$), for every periodic address $\saddr$
   there is a point $z\in J(F)$ with $\extaddr(z)=\saddr$ and
   $\re F^j(z)\geq R$ for all $j$. This shows that, in this setting, the set 
   $X$ does indeed satisfy our hypotheses.
 \end{proof}

 \subsection*{The case of finitely many tracts}

 After the preceding, somewhat abstract, discussion, let us return
  to the case where $F$ is the logarithmic transform of an entire function
  $f\in\B$ which has only finitely many, say $N$, tracts over infinity. 
  As discussed above, 
  $\A$ can be identified with the set of pairs
   $(s,j)$, where $j\in\{1,\dots,N\}$ and $s\in\Z$, where
   $(s_1,j_1)>(s_2,j_2)$ if $s_1>s_2$ or $s_1=s_2$ and $j_1>j_2$. 

 Now, extending $\A$ to the set $\ol{\A}$ involves simply adding an 
 ``intermediate entry'' between any two adjacent tracts, so we can identify $\ol{\A}$ with
  the set 
   \[ \left\{(s,j): j\in\left\{\frac{1}{2},1,\frac{3}{2},\dots,N\right\}, s\in\Z\right\}. \]
  
 \subsection*{The original dynamical plane}

 Suppose that the map $F\in\BlogP$ is a logarithmic transform
  of a function
  $f\in\B$. Then the translation $z\mapsto z + 2\pi i$ extends to
  a homeomorphism of $\tilde{H}$ to itself. We can quotient
  $\tilde{H}\setminus\{-\infty,+\infty\}$ to obtain
  a compactification of the original dynamical plane of $f$, with
  a ``circle of addresses'' at infinity. Again, this space will be
  homeomorphic to a closed unit disk, with the open disk corresponding
  to the original dynamical plane. 

\section{Proof of Theorem \ref{thm:bouquet}}\label{sec:proof}

 With Propositions \ref{prop:density} and \ref{prop:precomb}, we
  have established most of the conditions required to show that
  $J(F)$ is a {\cantorbouquet}, and hence establish Theorem
  \ref{thm:bouquet}.
  It only remains to show that, when $z_0\in J(F)$ and
  $z_n\in J(F)$ with $z_n\to z_0$, then the arcs connecting
  $z_n$ to infinity converge to the corresponding arc for $z_0$ in the
  Hausdorff metric (provided $F$
  satisfies a uniform head-start condition). 

 The main fact needed is as follows, where we use the
  speed ordering $\prec$
  as defined in Proposition \ref{prop:speedordering}.

 \begin{prop} \label{prop:nowiggle}
   Let $F\in\BlogP$ be a function that satisfies a uniform head-start 
    condition. Suppose that $a_n\in J(F)$ converges to 
    a point $a\in J(F)$, and that, for each $n$, 
    $b_n\in J_{\addr(a_n)}(F)$
    has the same external address as $a_n$ and satisfies 
    $a_n\preceq b_n$ in the speed ordering of $F$.

  If $b\in J(F)$ is an accumulation point of the sequence $b_n$, then 
     then $a\preceq b$. 
 \end{prop}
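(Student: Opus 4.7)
The plan is to argue by contradiction, after first establishing that $a$ and $b$ share a common external address so that the speed ordering between them is even defined. Write $\saddr_n := \addr(a_n) = \addr(b_n)$.

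The preliminary fact I need is the following: for any sequence $c_n \to c$ in $\overline{\T}$, one has $\addr(c_n) \to \addr(c)$ pointwise in $\A^{\N_0}$. Indeed, by property~(\ref{item:accumulatingatinfty}) of $\BlogP$, the closures of distinct tracts are pairwise disjoint and accumulate only at $\infty$; so if $c$ lies in $\overline{T_0}$, then for all large $n$ the point $c_n$ lies in $\overline{T_0}$ as well, making the first entries of $\addr(c_n)$ and $\addr(c)$ agree. Since $F$ extends continuously to $\overline{\T}$, applying this same observation to $F^j(c_n) \to F^j(c)$ shows that the $j$-th entries of $\addr(c_n)$ and $\addr(c)$ eventually agree, for every $j$. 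Applying the fact to $a_n \to a$ gives $\saddr_n \to \addr(a)$ pointwise; applying it along the subsequence with $b_n \to b$, and using $\addr(b_n) = \saddr_n$, forces $\addr(b) = \addr(a) =: \saddr$.

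Now suppose for contradiction that $b \not\preceq a$. Since $\prec$ is a total order on $J_{\saddr}(F)$ by Proposition~\ref{prop:speedordering}, this means $b \prec a$, so there is some $j \geq 0$ with $\re F^j(a) > \phi(\re F^j(b))$. By continuity of $F^j$ on $\overline{\T}$ and of $\phi$ on $\R$, the strict inequality passes to the limit: along the subsequence with $b_n \to b$ we have $\re F^j(a_n) \to \re F^j(a)$ and $\phi(\re F^j(b_n)) \to \phi(\re F^j(b))$, so that $\re F^j(a_n) > \phi(\re F^j(b_n))$ for all sufficiently large $n$ in the subsequence. Since $a_n$ and $b_n$ share the same address $\saddr_n$, this reads exactly as $b_n \prec a_n$ in the speed ordering of $\saddr_n$, contradicting the hypothesis $a_n \preceq b_n$.

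The only non-routine ingredient is the address-convergence step, which relies essentially on property~(\ref{item:accumulatingatinfty}) of $\BlogP$; the rest is a standard continuity argument. In particular, no quantitative features of $\phi$ beyond continuity are used, so the proof applies to any uniform head-start condition.
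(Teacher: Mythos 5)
Your proof is correct and follows essentially the same approach as the paper's: a continuity argument passing the strict inequality $\re F^k(a) > \phi(\re F^k(b))$ from the limit to the terms of the sequence (the paper phrases this as a contrapositive, you as a contradiction, but these are equivalent). You add one useful detail that the paper leaves implicit, namely the verification via property~(\ref{item:accumulatingatinfty}) that $\addr(a) = \addr(b)$, which is needed for the speed-ordering comparison $a \preceq b$ to be well-defined.
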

 \begin{proof}
  We prove the contrapositive: If $a\succ b$, then $a_n\succ b_n$ for
   sufficiently large $n$.

  Indeed, by definition of the speed ordering, there exists
   $k\geq 0$ such that $\re F^k(a)> \phi(\re F^k(b))$, where $\phi$ is
   the function from the head-start condition. Hence, for sufficiently
   large $n$, we also have $\re F^k(a_n)>\phi(\re F^k(b_n))$, by
   continuity, and thus $a_n\succ b_n$, as claimed. 
 \end{proof}

 \begin{prop} \label{prop:comb}
  Let $F\in\BlogP$ satisfy a uniform head-start condition, let 
   $X$ be as in Corollary \ref{cor:absorbing} and denote
   the closure of $X$ in $\tilde{H}$ by $\tilde{X}$.

  Then $\tilde{X}$ is a comb.
 \end{prop}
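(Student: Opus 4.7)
By Proposition~\ref{prop:precomb}, the continuum $\tilde{X} = X \cup \Sequt$ already satisfies three of the four requirements of a comb with base $B := \Sequt$: the closures of components of $\tilde{X}\setminus B = X$ are arcs with exactly one endpoint in $B$, distinct components have disjoint closures, and $X$ is dense in $\tilde{X}$. Only the final Hausdorff-continuity condition remains: if $x_n \in X$ converges to $x_0 \in X$, then the arcs $\gamma_{x_n}$ (the closures in $\tilde{X}$ of the unique arc-components of $X$ from $x_n$ to $B$) converge to $\gamma_{x_0}$ in the Hausdorff metric. By Proposition~\ref{prop:speedordering}, I can identify $\gamma_y = \{w \in J_{\addr(y)}(F) \cap X : y \preceq w\} \cup \{\addr(y)\}$, where $\preceq$ is the speed ordering.

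The plan is to exploit compactness of the hyperspace of closed subsets of $\tilde{H}$: it suffices to show that any Hausdorff-subsequential limit $\gamma^*$ of a subsequence $\gamma_{x_{n_k}}$ equals $\gamma_{x_0}$. The key observation, which nicely reduces the usual two-sided convergence argument to a one-sided one, is that $\gamma^*$ will turn out to be a connected subset of the arc $\gamma_{x_0}$ containing both its endpoints $x_0 = \lim x_{n_k}$ and $\addr(x_0) = \lim \addr(x_{n_k})$; since any such subset must equal the whole arc, we conclude $\gamma^* = \gamma_{x_0}$.

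Thus the work reduces to verifying three points: (i) that $\addr(x_n) \to \addr(x_0)$ in $\Sequt$, (ii) the connectedness of $\gamma^*$, and (iii) the inclusion $\gamma^* \subset \gamma_{x_0}$. For (i), I would use property~(\ref{item:accumulatingatinfty}) of the class $\BlogP$ (tracts have pairwise disjoint closures accumulating only at infinity) together with the continuity of $F$ to conclude that the first $m$ symbols of $\addr(x_n)$ stabilize to those of $\addr(x_0)$ for each fixed $m$, and then invoke the definition of neighborhoods in $\Sequt$. Point (ii) is standard: the Hausdorff limit of a sequence of continua in a compact metric space is itself connected. For (iii), consider $p = \lim p_{n_k}$ with $p_{n_k} \in \gamma_{x_{n_k}}$ and split by cases. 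If $p \in X$, then Proposition~\ref{prop:nowiggle} applied with $a_n = x_{n_k}$ and $b_n = p_{n_k}$ yields $x_0 \preceq p$, while convergence of addresses applied again to the sequence $p_{n_k}$ gives $\addr(p) = \addr(x_0)$, so $p \in \gamma_{x_0}$. If instead $p \in B$, the topology of $\tilde{H}$ forces $p = \addr(x_0)$, the endpoint of $\gamma_{x_0}$ in $B$.

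The main point of delicacy—though nothing here is truly hard once the framework is in place—is the joint handling of $X$-points and $B$-points in the upper-semicontinuity step (iii); this is precisely what the combination of Proposition~\ref{prop:nowiggle} (controlling $X$-points via the speed ordering) and the compactification $\tilde{H}$ (which pins down the behaviour of escaping sequences) is designed to accomplish.
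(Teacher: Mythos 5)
Your proof is correct and follows essentially the same route as the paper's: reduce to the Hausdorff-continuity condition via Proposition~\ref{prop:precomb}, pass to a convergent subsequence, and identify the limit as the arc $\gamma_{x_0}$ by combining Proposition~\ref{prop:nowiggle} (giving containment in $\gamma_{x_0}$) with connectedness of Hausdorff limits of continua and the fact that the limit contains both endpoints $x_0$ and $\addr(x_0)$. Your unpacking of the paper's one-line ``clearly $A'$ is a subset of $X_{\saddr}\cup\{\saddr\}$'' into an explicit address-convergence step (your point (i)) is a useful clarification but not a different argument.
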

 \begin{proof}
   For any $z\in \tilde{X}$, let us denote by $A_z$ the unique arc connecting
   $z$ to $\Sequt$ in $\tilde{X}$. Then 
     \[ A_z = \{z\}\cup \{w\in J_{\saddr}(F): w\succ z\} \cup \{\saddr\}, \]
   where $\saddr=\extaddr(z)$. 
    By Proposition \ref{prop:precomb}, it only remains to check the requirement
   that, if a sequence $z_n\in\tilde{X}$ converges to a point $z_0\in\tilde{X}$, 
   then the arcs $A_{z_n}$ converge to $A_{z_0}$ in the Hausdorff metric. 

  Passing to a subsequence, we may assume that $A_{z_n}$ is a 
  convergent sequence in the Hausdorff metric; let the limit be
  $A'$, say. Clearly $A'$ must be a subset of the arc
  $X_{\saddr}\cup\{\saddr\}$, where $\saddr=\extaddr(z_0)$. Since $A'$ is 
  connected (as the Hausdorff limit of compact connected subsets of the
  compact space $\tilde{X}$) and contains both $z_0$ and $\saddr$, we must have
  $A_{z_0}\subset A'$. 
 
 On the other hand, Proposition \ref{prop:nowiggle} shows that
  $A'\subset A_{z_0}$, and hence $A_{z_0}=A'$. This proves that 
  $\tilde{X}$ is a comb.
 \end{proof}

 \begin{cor}[Julia set is a hairy arc] \label{cor:bouquetheadstart}
  Let $F\in\BlogP$ be of disjoint type and satisfy a uniform head-start 
   condition. Let $\tilde{J}(F)$ denote the closure of $J(F)$ in the
   topology of $\tilde{H}$. 

  Then the set $\tilde{J}(F)$ is a one-sided hairy arc. In particular, both
   $J(F)$ and $\exp(J(F))$ are {\cantorbouquet}s. 
 \end{cor}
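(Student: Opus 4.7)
The plan is to verify that the abstract compactification $\tilde J(F)$ is a one-sided hairy arc when embedded in the plane, so that Theorem \ref{thm:ambient} applies. Since $F$ is of disjoint type, Corollary \ref{cor:absorbing} lets us take $X = J(F)$, and Proposition \ref{prop:comb} then shows that $\tilde J(F) = J(F)\cup\Sequt$ is a comb with base $\Sequt$. To upgrade this to a hairy arc I would equip $\Sequt$ with the lexicographic order on addresses (which generates its topology by the construction of $\tilde H$) and verify the two-sided accumulation condition via Proposition \ref{prop:density}: for any $x\in J(F)$ on the hair at $b = \addr(x)$, that proposition yields sequences $x_n^\pm \in J(F)$ with $x_n^\pm \to x$ and $\addr(x_n^-) < b < \addr(x_n^+)$, so $x_n^\pm$ lie on hairs attached at addresses straddling $b$.

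Next I would embed $\tilde H$ as the closed unit disk $\ol{\D}\subset \R^2$ via a homeomorphism $\phi$ supplied by the proposition on the topology of $\tilde H$. Under this embedding the boundary circle $\partial \tilde H = \partial H \cup \Sequt$ maps to $\partial \D$, with $\phi(\Sequt)$ a closed arc and $A' := \phi(\partial H)$ the complementary closed arc. Because $F$ is of disjoint type, $J(F)\subset H$, so $\phi(J(F))\subset \D$ is disjoint from $A'$; hence $A'$ plays the role of the Jordan arc in the definition of a one-sided hairy arc, and $\phi(\tilde J(F))$ is a one-sided hairy arc in $\R^2$. Theorem \ref{thm:ambient} then produces a self-homeomorphism of $\R^2$ carrying $\phi(\tilde J(F))$ onto a compactified straight brush, which restricts to send $\phi(J(F))$ onto a straight brush.

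To transfer this ambient conclusion from the disk model back to $\C$, I would extend $\phi|_{\ol H}$ to a homeomorphism $\Phi\colon \C \to \R^2$ by a Sch\"onflies-type argument: $\C\setminus\ol H$ is a single Jordan domain homeomorphic to an open half-plane, matching the topological type of the complementary region $(\R^2\setminus\ol{\D})\cup\inter(\phi(\Sequt))$ in $\R^2$, so the boundary homeomorphism $\phi|_{\partial H}$ extends across the complement. Composing with the ambient homeomorphism from the previous step then gives a self-homeomorphism of $\C$ sending $J(F)$ onto a straight brush, so $J(F)$ is a Cantor bouquet. For $\exp(J(F))=J(f)$ the same strategy applies after quotienting $\tilde H$ by the translation $z\mapsto z+2\pi i$ to obtain a disk compactification of the original dynamical plane carrying a \emph{circle} of addresses at infinity, as sketched at the end of Section \ref{sec:comb}; intermediate addresses carry no hair and are dense in this circle, so cutting at any of them restores a one-sided hairy arc and the argument proceeds verbatim. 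The main obstacle is this Sch\"onflies-type extension step and its $2\pi i$-equivariant analogue for the exponential quotient: it requires that the complement of $\ol H$ (respectively its quotient) has the ``right'' topological type to accept the extension, which ultimately rests on the $2\pi i$-periodicity and the Jordan-domain hypothesis on $H$.
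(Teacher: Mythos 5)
Your overall strategy matches the paper's: Proposition~\ref{prop:comb} gives the comb structure, Proposition~\ref{prop:density} gives two-sided accumulation, one-sidedness follows from the embedding of $\tilde H$ as a disk, and Theorem~\ref{thm:ambient} is then invoked. The paper's own proof is very terse on the final step (``this easily implies that $J(F)$ is a Cantor bouquet''), and you are right that passing from the disk model back to the original plane is the point that needs attention. Your treatment of the exponential quotient also mirrors the paper's brief remark.

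However, the Sch\"onflies-type extension you propose is not possible. If a homeomorphism $\Phi\colon\C\to\R^2$ extended $\phi|_{\ol H}$, then $\Phi(\ol H)=\phi(\ol H)=\ol\D\setminus\phi(\Sequt)$; since $\ol H$ is closed in $\C$ and $\Phi$ is a self-homeomorphism of the plane, $\Phi(\ol H)$ would have to be closed in $\R^2$, but $\ol\D\setminus\phi(\Sequt)$ is not closed (its closure is all of $\ol\D$). Equivalently, the target you assign to $\C\setminus\ol H$, namely $(\R^2\setminus\ol\D)\cup\inter(\phi(\Sequt))$, is a $2$-manifold \emph{with boundary} (the open arc $\inter(\phi(\Sequt))$ is boundary), hence not homeomorphic to the open half-plane $\C\setminus\ol H$. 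The underlying difficulty is that $\ol H$ (closure in $\C$) corresponds under $\phi$ only to $\ol\D$ minus a closed boundary arc, never to a closed subset of $\R^2$, so there is no extension of $\phi|_{\ol H}$ to an ambient homeomorphism. Filling the gap correctly requires a different construction of the ambient map---not an extension of $\phi|_{\ol H}$ itself. (For instance, one can compare the compactification of a suitable half-plane $\ol{H_R}\supset J(F)$ with the standard compactification $[0,\infty]\times[-\infty,\infty]$ of the straight-brush model, and produce an ambient homeomorphism of $\R^2$ matching the two by first normalizing on the half-plane and then applying Theorem~\ref{thm:ambient} to the resulting one-sided hairy arcs together with a genuine Sch\"onflies extension on the open complementary half-plane, where no extraneous boundary arc appears.) The same objection applies to the $2\pi i$-equivariant version you propose for $\exp(J(F))$.
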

 \begin{proof}
  By Proposition \ref{prop:comb}, $\tilde{J}(F)$ is a comb,  and 
   Proposition \ref{prop:density} thus implies that
    $\tilde{J}(F)$ is a hairy arc, which is one-sided by construction. 
    Using Theorem \ref{thm:ambient}, this easily implies  
    that $J(F)$ is a {\cantorbouquet}. 

 It is straightforward
  to see that $\exp(J(F))$ is also a {\cantorbouquet}. For example,
  we can use the above-mentioned extension of the exponential map to
  $\tilde{H}\setminus\{-\infty,+\infty\}$ to see that $J(f)$ is 
  a brush of a ``hairy circle'' in the sense of \cite{AO} (which is
  the same has a hairy arc with the two endpoints of the base identified).  
 \end{proof}

 This completes the proof of Theorem \ref{thm:bouquet}. 

\section{The general case} \label{sec:proofgeneral}

 To prove Theorem \ref{thm:bouquetgeneral}, 
  it remains to consider the case of a function $F\in\BlogP$
  that is not necessarily
  of disjoint type. We have already seen that the set $X\subset J(F)$ 
  from Corollary~\ref{cor:absorbing} can be compactified to a comb, but in general there
  is no reason to expect that every hair in $X$ has other hairs accumulating
  both from above and below. However, we shall show that $X$ at least
  contains a {\cantorbouquet}. 
  Let us define, for $R>0$:
   \[ J^R(F) := \{z\in J(F): \re F^j(z)\geq R\text{ for all $j\geq 0$}\}. \]
   Our goal now is to show:
  \begin{prop}[Hairy arcs in Julia sets] \label{prop:hairyarcgeneral}
   Let $F\in\BlogP$ satisfy a uniform head-start condition, 
    and let $\tilde{X}$ be the comb from
    Proposition \ref{prop:comb}. 
    Then 
    there exists a hairy arc $Z\subset \tilde{X}$ 
    (again with base $B=\Sequt$) such
    that
     $J^R(F)\subset Z$ for sufficiently large $R$, and such that
    $Z\setminus \Sequt$ is $2\pi i$-periodic. 
  \end{prop}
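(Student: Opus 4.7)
The plan is to reduce to the disjoint-type case handled by Corollary \ref{cor:bouquetheadstart}, by truncating $F$ to a genuine disjoint-type restriction whose Julia set I can identify with the desired hairy arc inside $\tilde X$. Fix $R$ large enough that $R\geq K'$ (with $K'$ as in Corollary \ref{cor:absorbing}) and that the standard Eremenko--Lyubich expansion estimate forces $\re z > R$ for every $z\in F^{-1}(\ol{H_R})\cap\ol{\T}$. Set $\tilde{\T}:=F^{-1}(H_R)\cap\T$ and $\tilde F:=F|_{\tilde{\T}}\colon\tilde{\T}\to H_R$. Since $F|_T\colon T\to H$ is conformal and $H_R\subset H$ is simply connected, $\tilde{\T}$ meets each tract of $F$ in at most one component, which is a tract of $\tilde F$; by the expansion estimate, $\ol{\tilde{\T}}\subset H_R$, so $\tilde F\in\BlogP$ is of disjoint type. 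It is $2\pi i$-periodic as a restriction of the $2\pi i$-periodic map $F$, and it inherits the uniform head-start condition for the same $\phi$ (both clauses of Definition \ref{defn:headstart} are preserved under restriction of the domain).

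Next I would identify the alphabet of $\tilde F$ with $\A$ via the natural bijection on tracts, so that external addresses and the compactification $\Sequt$ for $\tilde F$ coincide with those for $F$. Two inclusions are then straightforward. First, if $z\in J^R(F)$ then every iterate $F^j(z)$ lies in $\ol{H_R}$, whence $F^{j-1}(z)\in F^{-1}(\ol{H_R})\cap\ol{\T}=\ol{\tilde{\T}}$ for all $j\geq 1$, so $z\in J(\tilde F)$. Second, if $z\in J(\tilde F)$ then $\re F^j(z)\geq R\geq K'$ for all $j\geq 0$, so $z\in X$ by Corollary \ref{cor:absorbing}(3). Combining, $J^R(F)\subset J(\tilde F)\subset X$.

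Now apply Corollary \ref{cor:bouquetheadstart} to the disjoint-type map $\tilde F$: the closure $Z:=\widetilde{J(\tilde F)}$, taken in the topology of $\tilde H$ from Section \ref{sec:comb}, is a one-sided hairy arc with base $\Sequt$ (the compactification construction depends only on the tracts and on half-plane preimages, so on $J(\tilde F)$ it agrees with the analogous topology built from $\tilde F$). The inclusion chain of the previous paragraph shows $Z\setminus\Sequt=J(\tilde F)\subset X=\tilde X\setminus\Sequt$, while $\Sequt\subset\tilde X$ by Proposition \ref{prop:precomb}, so $Z\subset\tilde X$ as a sub-comb sharing the same base; and $J^R(F)\subset Z$ for every $R$ above the threshold we fixed. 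The $2\pi i$-periodicity of $Z\setminus\Sequt=J(\tilde F)$ is immediate from that of $\tilde F$. The main obstacle is the first step: verifying carefully from the Eremenko--Lyubich estimates that $\ol{\tilde{\T}}\subset H_R$ for large $R$ (so that $\tilde F$ is genuinely of disjoint type), together with the coherence of the two compactification topologies on the common base $\Sequt$ and on the hairs; both points ultimately rest on the explicit neighborhood bases constructed in Section \ref{sec:comb}.
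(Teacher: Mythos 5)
The reduction you propose hinges on the claim that, for $R$ large, the truncation $\tilde F := F|_{\tilde{\T}}\colon\tilde{\T}\to H_R$ with $\tilde{\T}=F^{-1}(H_R)\cap\T$ is of \emph{disjoint type}, i.e.\ that $\overline{\tilde{\T}}\subset H_R$. This is false in general, and the Eremenko--Lyubich estimate cannot deliver it: Lemma~\ref{lem:eremenkolyubich} says $|F'|$ is large where $\re F$ is large, hence the inverse branches $F_T^{-1}$ are \emph{contracting} there, which tends to make $\re z$ small compared with $\re F(z)$, not large. Concretely, take $f(z)=\lambda e^z$ with $|\lambda|$ not small (so $f$ is not of disjoint type but satisfies a uniform head-start condition). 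A logarithmic transform is $F(w)=\log\lambda+e^w$ on tracts of the form $\{x+iy:\ e^x\cos y>c\}$. If $\re F(w)=R$, then $e^{\re w}\cos(\operatorname{Im}w)=R-\re\log\lambda$, so there are points $w\in\T$ with $\re F(w)=R$ and $\re w\approx\log R$. Thus $\overline{\tilde{\T}}\not\subset H_R$ for any large $R$, the truncated map is not of disjoint type, and Corollary~\ref{cor:bouquetheadstart} cannot be applied to it. What the tract properties (periodicity, $\re$ bounded below, accumulation only at $\infty$) do give is that $\re z\to\infty$ as $\re F(z)\to\infty$ uniformly on $\overline{\T}$, i.e.\ there is $M(R)\to\infty$ with $\re z\geq M(R)$ on $F^{-1}(\overline{H_R})\cap\overline{\T}$; but nothing forces $M(R)\geq R$, and in the exponential example $M(R)\sim\log R$.

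This is precisely the obstruction the paper's proof is designed to avoid. The real part is not monotone along hairs, so neither ``$\re F^j(z)\geq R$ for all $j$'' nor the preimage of a smaller half-plane carves a sub-comb out of $\tilde X$ in any direct way. The paper instead introduces a \emph{potential function} $\rho$ on $X$ built from a Whitney (size) function on the quotient comb: $\rho$ is $2\pi i$-periodic, strictly increasing along hairs, and tends to $\infty$ exactly when $\re z$ does. Taking $Z=\{z\in X:\rho(F^j(z))\geq K\ \forall j\}\cup\Sequt$ yields a genuine sub-comb, and the head-start condition together with Proposition~\ref{prop:density2} and the expansion of Lemma~\ref{lem:eremenkolyubich} is then used to show $Z$ is a hairy arc, with $J^R(F)\subset Z$ for suitable $R$. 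Your argument is missing this monotone reparametrization along hairs; without it, or without a genuinely disjoint-type restriction (which the half-plane truncation does not produce), the accumulation-from-both-sides property cannot be transferred from Proposition~\ref{prop:density2} to a sub-comb containing $J^R(F)$.
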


 In order to work with non-disjoint-type functions, we need the following
  well-known expansion estimate.
 \begin{lem}[{\cite[Lemma 1]{EL}}] \label{lem:eremenkolyubich}
  Let $F:\T\to H$ be an element of $\BlogP$, and let 
   $K$ be sufficiently large that
   $H_K\subset H$. Then
    \[ |F'(z)|\geq \frac{1}{4\pi}(\re F(z) - \ln K)\quad
      \text{for all $z\in\T$.} \]
  In particular, there is $R>0$ such that
   $|F'(z)|\geq 2$ whenever $\re F(z)\geq R$.
 \end{lem}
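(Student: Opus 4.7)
The strategy is to apply the Koebe $1/4$ theorem to the conformal inverse $\psi := F_T^{-1}\colon H \to T$, combined with the elementary observation that a single tract of $F$ cannot contain a Euclidean disk of large radius.

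I would fix $z\in\T$, let $T$ be the tract containing $z$, set $w := F(z) \in H$, and write $\psi := F_T^{-1}\colon H\to T$, so that $\psi(w)=z$ and $|F'(z)| = 1/|\psi'(w)|$. Putting $r := \dist(w,\partial H)$, the disk $B(w,r)$ is contained in $H$ and $\psi$ is univalent there, so Koebe's $1/4$ theorem yields
\[
 \psi\bigl(B(w,r)\bigr) \;\supset\; B\!\left(z,\,\tfrac{1}{4}|\psi'(w)|\,r\right) \;\subset\; T.
\]

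The key step is then to bound the size of a disk that fits inside a single tract. By property (3) in the definition of $\BlogP$, the tract $T$ is disjoint from all of its $2\pi i\Z$-translates; in particular $T$ and $T+2\pi i$ are disjoint, so $T$ contains no pair of points at Euclidean distance $2\pi$. Therefore every open disk contained in $T$ has radius strictly less than $\pi$, and applied to the image disk above this forces $\tfrac{1}{4}|\psi'(w)|\,r < \pi$, i.e.\ $|F'(z)| = 1/|\psi'(w)| \geq r/(4\pi)$.

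To conclude I need a lower bound on $r=\dist(w,\partial H)$. In the Eremenko--Lyubich framework from \cite{EL}, to which the estimate is attributed, $H$ is obtained as the preimage under $\exp$ of the complement of a bounded set whose size is parametrized by $K$, so the inclusion $H_K\subset H$ places $\partial H$ inside the half-plane $\{\re\zeta \leq \ln K\}$. Consequently $\dist(w,\partial H) \geq \re w - \ln K$ for every $w\in H$, and substituting into the previous inequality gives the main estimate $|F'(z)| \geq (\re F(z)-\ln K)/(4\pi)$. The ``in particular'' clause is then immediate: any $R$ with $R-\ln K \geq 8\pi$ forces $|F'(z)|\geq 2$ whenever $\re F(z)\geq R$. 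The only genuinely non-trivial point is the disk-size bound inside $T$, which depends essentially on the disjointness of $2\pi i\Z$-translates and is what permits purely geometric input (Koebe distortion plus disjointness) to yield an analytic expansion estimate.
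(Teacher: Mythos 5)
Your argument is the standard proof of this estimate; the paper itself does not reprove the lemma but cites \cite[Lemma 1]{EL}, whose proof is precisely the combination you describe (Koebe's $1/4$-theorem for the inverse branch $F_T^{-1}$ on the disk $B(F(z),\dist(F(z),\partial H))$, together with the fact that a tract contains no large disk). Two points deserve correction, neither fatal. First, the justification of the disk bound is misstated: $T\cap(T+2\pi i)=\emptyset$ does \emph{not} imply that $T$ contains no pair of points at Euclidean distance $2\pi$ --- an unbounded horizontal strip of height less than $2\pi$ is disjoint from its $2\pi i$-translate yet contains points at arbitrary distance. What disjointness forbids is a pair of points differing by the specific vector $2\pi i$. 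Your conclusion is nonetheless correct and follows directly: if $B(c,r)\subset T$ with $r>\pi$, then $c-\pi i$ and $c+\pi i=(c-\pi i)+2\pi i$ both lie in $T$, contradicting $T\cap(T+2\pi i)=\emptyset$; so no disk of radius exceeding $\pi$ fits in $T$, and the Koebe image disk gives $\tfrac14|F'(z)|^{-1}\dist(F(z),\partial H)\leq\pi$, i.e.\ $|F'(z)|\geq\dist(F(z),\partial H)/(4\pi)$. Second, the constant: from the hypothesis $H_K\subset H$ as literally stated one only deduces $\partial H\subset\{\re\zeta\leq K\}$, hence the bound with $K$ rather than $\ln K$; your sentence asserting that $H_K\subset H$ ``places $\partial H$ inside $\{\re\zeta\leq\ln K\}$'' is not a correct implication. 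The $\ln K$ is an artifact of the normalization in \cite{EL}, where $H=\exp^{-1}(\{|w|>K\})=\{\re\zeta>\ln K\}$, so this is a mismatch of conventions inherited from the statement itself rather than a flaw in your argument; in either normalization the ``in particular'' clause, which is all the paper uses, follows at once.
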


   The following is the main dynamical fact that will allow us to construct a 
   {\cantorbouquet} in $J(F)$. 
 \begin{prop}[Accumulation from above and below]\label{prop:density2}
  Let $F:\T\to H$ belong to the class $\BlogP$, and let $\tau>0$. 
  
  Then  there exists $\tau'\geq \tau$ with the following property.
   For every $z_0\in J^{\tau'}(F)$, there exist
   sequences 
   $z_n^-,z_n^+\in J^{\tau}(F)$ with
   $\addr(z_n^-)<\addr(z_0)<\addr(z_n^+)$ for all $n$ and
   $z_n^-\to z_0$, $z_n^+\to z_0$.
 \end{prop}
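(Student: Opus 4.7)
The plan is to imitate the proof of Proposition~\ref{prop:density} by constructing $z_n^{\pm}$ as pullbacks of $F^n(z_0) \pm 2\pi i$ along the orbit of $z_0$, that is, as images under an appropriate branch of $F^{-n}$. In the disjoint-type case this was controlled by uniform hyperbolic expansion on $H$; here we have no such global expansion, so I would instead rely on the Euclidean derivative estimate of Lemma~\ref{lem:eremenkolyubich} to get Euclidean contraction for inverse branches, at the cost of restricting to points whose orbit stays deep inside $H$.

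Concretely, I would first invoke Lemma~\ref{lem:eremenkolyubich} to fix $R>0$ such that $H_R \subset H$ and $|F'(z)| \geq 2$ whenever $\re F(z) \geq R$; equivalently, each inverse branch $F_T^{-1}$ satisfies $|(F_T^{-1})'(w)| \leq 1/2$ on $H_R$. Then I would set $\tau' := \max(\tau, R) + 2\pi$. Given $z_0 \in J^{\tau'}(F)$, write $u_j := F^j(z_0)$ and let $T_j$ be the tract containing $u_j$. For each $n \geq 1$, put $v_n^{\pm} := u_n \pm 2\pi i$ and define recursively $v_j^{\pm} := F_{T_j}^{-1}(v_{j+1}^{\pm})$ for $j = n-1, n-2, \ldots, 0$, finally setting $z_n^{\pm} := v_0^{\pm}$.

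A downward induction on $j$ would give the estimate
\[
 |v_j^{\pm} - u_j| \;\leq\; 2\pi \cdot 2^{\,j-n}\qquad (0 \leq j \leq n).
\]
At each step, the straight segment from $u_{j+1}$ to $v_{j+1}^{\pm}$ has real part at least $\tau' - 2\pi \geq R$, so it lies in the convex set $H_R \subset H$; thus $F_{T_j}^{-1}$ is well-defined on it and integrating the derivative bound $1/2$ along the segment halves the distance. Consequently $|z_n^{\pm} - z_0| \leq 2\pi/2^n \to 0$, and every intermediate $v_j^{\pm}$ has real part at least $\tau$. The forward orbit of $z_n^{\pm}$ past step $n$ then coincides with that of $z_0$: by the $2\pi i$-periodicity of $F$ (property (5) of $\BlogP$), $F(v_n^{\pm}) = F(u_n \pm 2\pi i) = F(u_n) = u_{n+1}$, so $F^j(z_n^{\pm}) = u_j$ for $j \geq n$, and hence $z_n^{\pm} \in J^{\tau}(F)$. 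Finally, the external address of $z_n^{\pm}$ differs from $\addr(z_0)$ only in the $n$-th entry, where $T_n$ is replaced by the vertically adjacent tract $T_n \pm 2\pi i$, giving the desired lexicographic inequalities.

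The only delicate point is the bookkeeping in the induction: at each pullback we must verify both that the target point lies in $H$ (so the inverse branch applies) and that the Euclidean contraction estimate is valid along the entire segment being pulled back. Both requirements are ensured by the $2\pi$ margin built into the definition of $\tau'$, together with the geometric decay of $|v_j^{\pm} - u_j|$, so no further dynamical input is needed beyond Lemma~\ref{lem:eremenkolyubich} and the periodicity of $F$.
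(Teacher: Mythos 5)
Your proposal is correct and follows the same approach the paper sketches: fix $R$ from Lemma~\ref{lem:eremenkolyubich}, set $\tau'$ large enough that all pullbacks of $F^n(z_0) \pm 2\pi i$ stay in $H_R$, and use Euclidean contraction of the inverse branches in place of the hyperbolic contraction used in Proposition~\ref{prop:density}. The paper's terse proof uses the slightly sharper constant $\tau' = \max(R,\tau)+\pi$ (the margin only needs to absorb the worst deviation, which occurs one pullback in and is at most $\pi$, the initial offset $\pm 2\pi i$ being purely vertical), while your $\tau' = \max(R,\tau)+2\pi$ is a harmless, marginally more generous choice.
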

 \begin{proof}
  Let $R$ be the number from
   Lemma \ref{lem:eremenkolyubich}, so that
   $|F'(z)|\geq 2$ whenever $\re F(z)\geq R$; also assume that
   $R$ is large enough such that $H_R\subset H$.
   If we set
   $\tau':=\max(R,\tau)+\pi$, then the claim follows analogously to
   Proposition \ref{prop:density}, using expansion in the Euclidean
   metric instead of expansion in the hyperbolic metric. 
 \end{proof}

 We are now ready to prove Proposition \ref{prop:hairyarcgeneral}. For the remainder
  of the section, we 
  fix the function $F\in\BlogP$ (satisfying a uniform head-start condition)
  and the sets $X$ and $\tilde{X}$. To begin,
  we essentially show that the comb $\tilde{X}$ can be ``straightened'',
  using the same proof as \cite[Theorem 3.11]{AO}.

 \begin{prop}[Potential function]
  There exists a continuous function $\rho:X\to [0,\infty)$ with
  the following properties.
  \begin{enumerate}
    \item $\rho$ is $2\pi i$-periodic;
    \item $\rho$ is strictly increasing along hairs; i.e.\ if
      $z,w\in X$ have the same external address and
      $z\succ w$, then $\rho(z)>\rho(w)$;
    \item For any sequence $(z_n)$, we have
      $\rho(z_n)\to\infty$ if and only if
      $\re z_n\to\infty$. \label{item:whitneyinfinity}
  \end{enumerate}
 \end{prop}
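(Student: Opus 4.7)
My plan is to build $\rho$ from a Whitney-type size function on the tail arcs $A_z \subset \tilde{X}$ given by Proposition~\ref{prop:comb}. Recall that for each $z \in X$, the set $A_z$ is the unique arc in the comb $\tilde{X}$ joining $z$ to its external address $\extaddr(z) \in \Sequt$, and that the assignment $z \mapsto A_z$ is Hausdorff-continuous in $z$. As $z$ moves along its hair in the $\prec$-direction, $A_z$ shrinks monotonically toward the single point $\extaddr(z)$, so the reciprocal of a suitable ``size'' of $A_z$ is the natural candidate for $\rho(z)$.

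To make this precise I would first pass to the $2\pi i$-quotient: let $\hat{H}$ be the compactification of the original dynamical plane from the end of Section~\ref{sec:comb}, let $\hat{X}$ be the projection of $\tilde{X}$ to $\hat{H}$, and let $\pi$ be the quotient map. Then $\hat{X}$ is a compact metric continuum, so by the standard existence theorem for Whitney maps there is a continuous function $\omega : C(\hat{X}) \to [0,1]$ with $\omega(\{x\}) = 0$ and $A \subsetneq B \Rightarrow \omega(A) < \omega(B)$. I would then set
\[
  \rho(z) \;:=\; \frac{1}{\omega(\pi(A_z))} \;-\; \frac{1}{\omega(\hat{X})}.
\]
Periodicity is automatic from the quotient; continuity follows from Proposition~\ref{prop:comb} and Hausdorff-continuity of $\omega$; strict monotonicity along a hair holds because $\pi$ is injective on any single hair (distinct hairs of $\tilde{X}$ have distinct external addresses, and $2\pi i$-translates shift the address), so $z \prec w$ on the same hair yields the strict inclusion $\pi(A_w) \subsetneq \pi(A_z)$, which the Whitney property promotes to $\rho(w) > \rho(z)$.

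The main obstacle will be property~(\ref{item:whitneyinfinity}). The forward direction is straightforward: if $\re z_n \to \infty$, then compactness of $\tilde{H}$ together with the explicit neighborhood base at points of $\Sequt$ constructed in Section~\ref{sec:comb} forces every subsequential limit of $z_n$ to lie in $\Sequt$; both endpoints of $A_{z_n}$ (namely $z_n$ and $\extaddr(z_n)$) are then trapped near the same $\saddr \in \Sequt$, so the arcs $\pi(A_{z_n})$ Hausdorff-shrink to a singleton and $\rho(z_n) \to \infty$. The converse is trickier: from $\omega(\pi(A_{z_n})) \to 0$ one first deduces, by compactness of $C(\hat{X})$ together with the Whitney property, that any Hausdorff sub-limit of $\pi(A_{z_n})$ is a singleton in $\hat{X}$; since each $A_{z_n}$ meets $\Sequt$, that singleton must lie in the circle at infinity of $\hat{H}$. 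One must then verify, against the concrete topology of Section~\ref{sec:comb}, that this really forces $\re z_n \to \infty$ rather than permitting the $z_n$ themselves to remain in a bounded part of $H$ while only the ``tails'' of the arcs $A_{z_n}$ retreat to infinity.
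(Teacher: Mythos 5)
Your proposal matches the paper's own argument: both pass to the $2\pi i$-quotient of $\tilde X\setminus\{-\infty,+\infty\}$, take a Whitney size function $\omega$ on its hyperspace, and set $\rho(z)=1/\omega(\pi(A_z))$ (your additive normalization is inessential). The worry you flag in the converse of property~(\ref{item:whitneyinfinity}) resolves itself: since $z_n$ is a point of $A_{z_n}$, once $\pi(A_{z_n})$ Hausdorff-shrinks to a singleton on the circle at infinity, $\pi(z_n)$ converges there as well, and the neighborhood base of Section~\ref{sec:comb} then forces $\re z_n\to\infty$.
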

 \begin{proof}
  For any point $x\in X$, let us denote by 
   $A_x$ the unique arc in $\tilde{X}$ that connects $x$ to the base
   $B=\Sequt$. 

  Recall that we can quotient the space $\tilde{X}\setminus \{-\infty,+\infty\}$
   by the action of $z\mapsto z+2\pi i$ (extended to $\Sequt$ in the natural
   way) to obtain a compact, connected metric space $Y$. We denote the
   projection to the quotient by 
   $\pi:\tilde{X}\setminus \{-\infty,+\infty\}\to Y$.
 
  Let $\omega:Y\to [0,1]$ be a \emph{Whitney function}---also
   referred to as a \emph{size function}---for $Y$. That is,
   $\omega$ is a continuous function on the set of
   non-empty compact subsets of $Y$ (with the Hausdorff metric), 
   taking values in $[0,1]$, and having the following two properties.
   On the one hand, 
   $\omega(\{x\})=0$ for all $x\in Y$, and on the
   other, $\omega$ is strictly increasing; i.e.\
   if $A\subsetneq B$, then $\omega(A)<\omega(B)$.
   Such a function exists for any compact metric space
   (see e.g.\ \cite[Exercise 4.33]{nadler}).

  We define 
    \[ \rho:X\to [0,\infty), \quad \rho(x) := \frac{1}{\omega(\pi(A_x))} \]
   and claim that $\rho$ has the desired properties. Indeed,
   the function is $2\pi i$-periodic by definition, and the
   fact that $\rho$ is increasing along hairs follows from the fact that
   $\omega$ is an increasing function. Finally, the ``only if'' direction in
   (\ref{item:whitneyinfinity}) follows from the first property
   of a Whitney function, while the ``if'' follows from the fact that
   $\tilde{X}$ is a comb. 
 \end{proof}

 \begin{proof}[Proof of Proposition \ref{prop:hairyarcgeneral}]
  Let $\rho$ be the function from the preceding Proposition, let
   $\tau$ be sufficiently large that $J^{\tau}(F)\subset X$ and choose
   $\tau'$ according to Proposition \ref{prop:density2}. Let $K$ be such that 
   $\re z\geq \tau'$ whenever $\rho(z)\geq K$, and $R>0$ such that
   $\rho(z)\geq K$ whenever $\re z \geq R$. 

  We define 
    \[ Z := \{z\in X: \rho(F^j(z))\geq K\text{ for all $j\geq 0$}\}\cup
           \Sequt. \] 
  Since $\rho$ is a continuous function, it follows that
   $Z$ is a compact subset
   of $\tilde{X}$. The fact that $\rho$ is increasing along hairs
   implies that $Z$ is a comb. The set $J^R(F)$ is contained in $Z$ by 
   definition.
   It remains to show that 
   $Z$ is a hairy arc. 

  So let $z_0\in Z$. Let us assume first that $z_0$ is not
   an endpoint of $Z$, so that there is a point $w_0\in Z$ with
   $\addr(w_0)=\addr(z_0)$ and $w_0\prec z_0$. So
   there is some $M\geq 0$ such that $\re F^{M}(z_0)> \phi(\re F^{M}(w_0))$,
   where $\phi$ is the function from the head-start condition. 
   
  Since $w_0\in Z$, we have $w_0\in J^{\tau'}(F)$. Thus we can apply
   Proposition \ref{prop:density2}: there exist
   sequences $w_n^-,w_n^+\in J^{\tau}(F)\subset X$, converging to $w_0$, with 
   $\addr(w_n^-)<\addr(w_0)<\addr(w_n^+)$. Since $X$ is a comb, we can pick
   $z_n^{\pm}\succ w_n^{\pm}$ such
   that $z_n^-\to z_0$ and $z_n^+\to z_0$. By continuity, we have
   $\re F^M(z_n^{\pm})>\phi(\re F^j(w_n^{\pm}))$ for sufficiently
   large $n$; without loss of generality, we may assume this holds for
   all $n$. Using the head-start condition, we have
   \[ \re F^j(z_n^{\pm})>\phi(\re F^j(w_n^{\pm}))\geq \re F^j(w_n^{\pm}) \]
   for all $j\geq M$. Furthermore, let $\eps>0$ be such that
   $|z_n^{\pm}-w_n^{\pm}|\geq \eps$ for all $n$. Since $F$ is expanding,
   we have 
    \[ |F^j(z_n^{\pm})-F^j(w_n^{\pm})|\geq 2^j\cdot \eps. \]

  Recall that the union $\T$
   of tracts of $F$ is $2\pi i$-periodic, that each tract is
   disjoint from its $2\pi i$-translates and that they
   accumulate only at $\infty$. Hence if  $C>0$ is sufficiently large,
   then the following holds: If 
   $w\in \ol{\T}$ and $z\in\ol{\T}$ with $\re z \geq \re w$ and
   $|z-w|\geq C$, then $\re w\geq R$. 

  So if $M_1\geq M$ with
   $2^{M_1}\geq C/\eps$, then  
   \[ \re(F^j(z_n^{\pm})\geq R, \quad\text{and hence}\quad
      \rho(F^j(z_n^{\pm}))\geq K \]
   for all $n$ and all $j\geq M_1$. Also, if $n$ is large enough, then
   $\rho(F^j(z_n^{\pm}))>K$ for $j=0,\dots,M_1-1$ (by continuity).
   Thus $z_n^{\pm}\in Z$ for sufficiently large $n$, as desired.
 
  If $z_0$ is an endpoint of $Z$, then we can pick a sequence of points
   on the hair of $z_0$. Each of these is not an endpoint and 
   hence we can apply the fact 
   we have just proved. By diagonalization, we can thus find sequences
   $z^-, z^+\in Z$ with $z^{\pm}\to z_0$ and 
   $\extaddr(z^-)<\extaddr(z_0)<\extaddr(z^+)$. 

  Thus we have verified that the comb $Z$ is indeed a hairy arc. 
 \end{proof}

 As in the previous section, we obtain the following Corollary. 
 \begin{cor}
  Let $F\in\BlogP$. Then the set $X$ from Corollary \ref{cor:absorbing} can
   be chosen such that $X$ and $\exp(X)$ are {\cantorbouquet}s. 
 \end{cor}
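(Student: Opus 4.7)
My approach mirrors the one used in Corollary \ref{cor:bouquetheadstart} for the disjoint-type case, with Proposition \ref{prop:hairyarcgeneral} substituted for the statement there that $\tilde J(F)$ is itself a hairy arc; that statement breaks down when $F$ fails to be of disjoint type. I will assume, as is implicit in the reference to Corollary \ref{cor:absorbing}, that $F$ satisfies a uniform head-start condition.

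First I would apply Proposition \ref{prop:hairyarcgeneral} to obtain the hairy arc $Z\subset\tilde X$ with base $B=\Sequt$, whose hair part $X:=Z\setminus\Sequt$ is $2\pi i$-periodic and contains $J^R(F)$ for large $R$. I then verify that $X$ can indeed serve as the absorbing set demanded by Corollary \ref{cor:absorbing}: forward invariance $F(X)\subset X$ is built into the dynamical defining condition $\rho(F^j(z))\geq K$ for all $j\geq 0$ from the proof of Proposition \ref{prop:hairyarcgeneral}; each connected component of $X$ is a closed arc to infinity in $\C$ because $Z$ is a hairy arc and the points of $\Sequt$ attract only sequences with $\re z\to\infty$; and the absorbing property is an immediate consequence of $J^R(F)\subset Z$, perhaps after enlarging $R$.

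To see that $X$ is a {\cantorbouquet} I copy the argument of Corollary \ref{cor:bouquetheadstart}: $\tilde H$ is a closed topological disk, hence identifiable with $\ol\D\subset\R^2$, under which $Z$ becomes a one-sided hairy arc in $\R^2$. Theorem \ref{thm:ambient} then supplies an ambient homeomorphism of $\R^2$ carrying $Z$ onto a straight one-sided hairy arc, and its restriction to $X$ is the required ambient homeomorphism onto a straight brush. For $\exp(X)$ I exploit the $2\pi i$-periodicity and pass to the quotient $\tilde H/2\pi i\Z$, which is a closed annulus (equivalently, a compactification of $\C$ by a circle at infinity). The image of $Z$ in this quotient is a hairy circle, obtained by identifying the endpoints $\pm\infty$ of $\Sequt$; its hair part is precisely $\exp(X)$, and the hairy-circle variant of Theorem \ref{thm:ambient} invoked at the end of Corollary \ref{cor:bouquetheadstart} yields the claim.

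The main obstacle I anticipate is making rigorous the transition from the abstract hairy-arc structure of $Z\subset\tilde H$ to an ambient statement about $X$ in the original plane $\C$, since the identification $\tilde H\cong\ol\D\subset\R^2$ embeds $\ol H$ into $\R^2$ in a way that need not agree on the nose with its natural inclusion into $\C$. As the authors treat this step as ``easy'' in the disjoint-type case, I expect to import the same argument verbatim: use that identification to apply Theorem \ref{thm:ambient}, and then pull back to recover an ambient statement in the original $\C$.
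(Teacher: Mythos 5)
Your proposal is correct and follows essentially the same route as the paper, which simply invokes the argument of Corollary~\ref{cor:bouquetheadstart} with Proposition~\ref{prop:hairyarcgeneral} playing the role that the one-sided-hairy-arc statement about $\tilde J(F)$ played there. Your additional verification that the smaller set $Z\setminus\Sequt$ again satisfies the three properties of Corollary~\ref{cor:absorbing} (forward invariance from the dynamical definition via $\rho$, components being closed arcs to infinity from the comb structure, and the absorbing property from $J^R(F)\subset Z$), and your explicit flag that the hypothesis of a uniform head-start condition is implicit in the statement, are both apt and consistent with what the paper leaves tacit.
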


 This completes the proof of Theorem \ref{thm:bouquetgeneral}.

 \subsection*{Acknowledgments}
  We would like to thank Helena Mihaljevi\'c-Brandt for interesting 
   discussions.

\end{document}